\documentclass[11pt]{article}

\usepackage[margin=1in]{geometry}
\usepackage[hidelinks]{hyperref}
\usepackage{microtype}
\usepackage{xcolor}
\usepackage{comment}

\usepackage{amsmath,amssymb,amsthm,mathtools}
\usepackage{enumitem}

\usepackage[nameinlink,noabbrev]{cleveref}

\theoremstyle{plain}
\newtheorem{theorem}{Theorem}[section]
\newtheorem{proposition}[theorem]{Proposition}
\newtheorem{corollary}[theorem]{Corollary}
\newtheorem{lemma}[theorem]{Lemma}

\theoremstyle{definition}
\newtheorem{definition}[theorem]{Definition}

\theoremstyle{remark}
\newtheorem{remark}[theorem]{Remark}

\theoremstyle{definition}
\newtheorem{example}[theorem]{Example}

\newcommand{\Q}{\mathbb{Q}}

\newcommand{\Catnum}[1]{\mathrm{Cat}_{#1}}
\newcommand{\Coeff}{\operatorname{Coeff}}

\newcommand{\Z}{\mathbb{Z}}
\newcommand{\C}{\mathbb{C}}
\newcommand{\Gal}{\mathrm{Gal}}

\newcommand{\ii}{\mathrm{i}}

\newcommand{\binompar}[2]{\binom{#1}{#2}_{\mathrm{par}}}

\title{Inverse-Limit Formulas and Stable-Range Rigidity for Cyclotomic Sums }

\author{Juan D. Vélez, Carlos Cadavid}
\date{\today}

\begin{document}
\maketitle

\begin{abstract}
We study truncation-compatible families \(F=(F_m)_{m\ge 1}\) over \(\Q[z]\) through an inverse-limit formalism, and we evaluate them at the punctured cyclotomic cosine points \(\alpha_{k,n}=\cos(2\pi k/n)\) with the specialization \(z=n-1\). For symmetric families of uniformly bounded total \(x\)-degree \(\le d\), we prove a stable-range rigidity theorem: for all \(n\ge d+2\), the cosine-point evaluation factors through the finitely many punctured cosine power sums \(P_1(n),\dots,P_d(n)\). In the purely polynomial case this implies eventual polynomiality in \(n\). We then extend the framework to include fixed product factors and package their cosine-point contribution in multiplicative invariants \(M_Q(n)\). In the stable range, the bounded-degree symmetric part collapses as before; any remaining cyclotomic dependence occurs only through these explicit product terms. Finally, we show that coefficient extraction from such products produces further bounded-degree symmetric families, and we apply this to complete symmetric functions \(h_r\) evaluated at cosine points.
\end{abstract}

\subsubsection*{Summary of results.}

Much of the classical literature on cyclotomic trigonometric sums is devoted to explicit evaluations at \emph{fixed levels}. In this paper we take a different point of view. Instead of treating each level \(n\) in isolation, we ask how \(n\)-dependent expressions behave once their algebraic complexity is fixed and the cyclotomic level becomes large.

Our basic objects are \emph{polynomial formulas}: truncation-compatible families \(F=(F_m)_{m\ge 1}\) with
\[
F_m\in\Q[z][x_1,\dots,x_m],
\qquad
F_{m+1}(x_1,\dots,x_m,0)=F_m(x_1,\dots,x_m).
\]
At level \(n\ge 2\) we evaluate such a family at the punctured cosine points
\[
\alpha_{k,n}=\cos\!\Bigl(\frac{2\pi k}{n}\Bigr),
\qquad 1\le k\le n-1,
\]
using \(m=n-1\) variables together with the specialization \(z=n-1\). This produces a numerical sequence
\[
a(n)=F_{n-1}(\alpha_{1,n},\dots,\alpha_{n-1,n})\Big|_{z=n-1}.
\]

\smallskip

Two types of universal invariants control these evaluations. The additive invariants are the punctured cosine power sums
\[
P_h(n)=\sum_{k=1}^{n-1}(2\alpha_{k,n})^h,
\]
while fixed unit-normalized product factors \(Q(z,t)\in\Q[z][t]\) with \(Q(z,0)=1\) give rise to multiplicative invariants
\[
M_Q(n)=\prod_{k=1}^{n-1}Q(n-1,\alpha_{k,n}).
\]

\bigskip

\subsubsection*{Main results.}

\begin{itemize}
\item \textbf{Inverse-limit formalism.}
We show that truncation-compatible families provide a natural language for \(n\)-dependent summation problems. Passing to an inverse limit packages the levelwise data into a single algebraic object and allows cosine-point evaluation to be formulated uniformly in \(n\).

\item \textbf{Stable-range rigidity.}
If a family is symmetric and has uniformly bounded total \(x\)-degree \(\le d\), then its cosine-point evaluation becomes rigid once \(n\) is large. More precisely, for all \(n\ge d+2\) the evaluation necessarily factors through the finite list of invariants \(P_1(n),\dots,P_d(n)\). Any remaining cyclotomic dependence is forced to appear only through explicit multiplicative factors \(M_Q(n)\).

\item \textbf{Effective evaluation of the invariants.}
Using binomial identities arising from discrete heat-kernel methods, we obtain explicit stable-range formulas for the punctured power sums \(P_h(n)\). This makes the rigidity theorem computationally effective.

\item \textbf{Eventual polynomiality.}
In the purely polynomial case, every admissible bounded-degree symmetric family has an eventual polynomial evaluation: there exists \(R_\infty(n)\in\Q[n]\) such that
\[
a(n)=R_\infty(n)\qquad(n\ge d+2).
\]
As a consequence, global identities reduce to checking finitely many small values of \(n\).

\item \textbf{Coefficient extraction as a construction principle.}
We show that extracting coefficients from unit-normalized products
\[
\prod_{j=1}^{m} Q(z,sx_j)
\]
systematically produces admissible bounded-degree symmetric families. This turns the theory from a purely reductive statement into a method for constructing new examples.

\item \textbf{Application to complete symmetric functions.}
Applying this mechanism to \(Q(t)=(1-t)^{-1}\) yields the complete symmetric polynomials \(h_r\). For each fixed \(r\), we compute explicit stable-range formulas for \(h_r(\alpha_{1,n},\dots,\alpha_{n-1,n})\), revealing an underlying Catalan--hypergeometric structure.
\end{itemize}

\subsubsection*{What is new.}

The contribution of this paper is mainly structural. The inverse-limit formalism makes precise the idea that an \(n\)-indexed construction should be treated as a single algebraic object, rather than as a collection of unrelated fixed-level expressions.

Within this framework, bounded-degree symmetry forces stabilization: once the level is large enough, only finitely many universal invariants can influence the evaluation. Eventual polynomiality and finite verification then follow as formal consequences, rather than as ad hoc computational phenomena. The fixed-product and coefficient-extraction extensions further clarify where genuine cyclotomic variation may persist and provide a systematic source of new admissible families.

\bigskip

\section{Introduction}
\label{sec:introduction}

Finite sums of trigonometric values at rational angles, and their higher power analogues, occur throughout analytic number theory, spectral theory, and related areas. A substantial literature is devoted to their explicit evaluation. For cosine power sums one may consult, for example, da~Fonseca and collaborators \cite{dFK13,dFGK17}, Merca \cite{Me12}, the heat-kernel approach of \cite{CHJSV23,JKS23}, and interpolation or partial-fraction techniques \cite{CM99,AH18,Ha08}.

A basic role in this paper is played by the power sums
\[
C(n,h)=\sum_{k=0}^{n-1}\cos^{h}\!\Bigl(\frac{2\pi k}{n}\Bigr),
\qquad
S(n,h)=\sum_{k=0}^{n-1}\sin^{h}\!\Bigl(\frac{2\pi k}{n}\Bigr),
\]
which serve as building blocks for more complicated cyclotomic expressions.

\medskip

The focus of the present work, however, is not the computation of individual sums at a fixed \(n\). Instead, we study a recurring phenomenon: once the algebraic complexity of an \(n\)-dependent expression is fixed, its dependence on the cyclotomic level \(n\) often stabilizes for all sufficiently large \(n\).

Our evaluations take place on the \emph{punctured} cyclotomic cosine points
\[
\alpha_{k,n}=\cos\!\Bigl(\frac{2\pi k}{n}\Bigr),
\qquad 1\le k\le n-1,
\]
where the trivial point \(k=0\) is omitted. At each \emph{level} \(n\ge 2\) we evaluate a formula in \(m=n-1\) variables on the tuple \((\alpha_{1,n},\dots,\alpha_{n-1,n})\), with an auxiliary parameter specialized to \(z=n-1\).

\medskip

A simple example already illustrates the phenomenon. The quadratic energy
\[
\sum_{1\le i<j\le m}(x_i-x_j)^2
\]
is a symmetric polynomial of degree \(2\), and its evaluation at the punctured cosine configuration simplifies to the closed form \(n(n-3)/2\). Our goal is to explain why such stabilization is not accidental, but forced by general algebraic principles.

\medskip

\noindent\emph{Guiding question.}
When an \(n\)-dependent expression is built symmetrically from the punctured cosine points, what mechanism forces its evaluation to stabilize as \(n\) grows?

\medskip

Two structural ideas lie at the heart of the answer.

\smallskip
\noindent\textbf{(i) Inverse-limit packaging.}
At level \(n\) the ambient polynomial ring depends on \(n\), since the number of variables is \(n-1\). Many natural constructions are compatible with forgetting the last variable, implemented algebraically by the substitution \(x_{m+1}=0\). Encoding this compatibility leads naturally to an inverse-limit formalism, which allows level-dependent expressions to be treated as a single algebraic object.

\smallskip
\noindent\textbf{(ii) Bounded-degree symmetry.}
If the total \(x\)-degree is uniformly bounded and the expression is symmetric, then once the number of variables is large compared to the degree, the polynomial can depend only on finitely many power sums. After specialization to cosine points, these become the punctured invariants \(P_1(n),\dots,P_d(n)\). Our main theorem shows that this finite dependence persists after evaluation: for all sufficiently large \(n\), no further cyclotomic information can appear.

\medskip

To make this reduction effective, we use binomial identities arising from discrete heat-kernel methods, which yield stable-range formulas for the invariants \(P_h(n)\). Combining these identities with the structural reduction leads to eventual polynomiality and to a finite verification principle.

\medskip

We also isolate a natural source of genuine arithmetic variation: fixed unit-normalized product factors give rise to multiplicative invariants \(M_Q(n)\), which may retain non-polynomial dependence on \(n\), while the remaining bounded-degree symmetric part is still forced to stabilize.

\medskip

Finally, coefficient extraction from unit-normalized products provides a systematic way to construct new admissible families. As an application, we study complete symmetric functions evaluated at punctured cosine points, where the stable-range behavior is governed by a Catalan--hypergeometric structure.

\subsection{Heat-kernel binomial formulas}

Our stable-range evaluations ultimately reduce to explicit binomial expressions for the basic cosine and sine power sums. We record here the parity-restricted binomial coefficients and the resulting heat-kernel identities that will serve as the main computational engine.

\begin{definition}\label{def:parity-binomial}
For integers \(h\ge 0\) and \(u\in\Q\), define
\[
\binom{h}{u}_{\mathrm{par}}
=
\begin{cases}
\binom{h}{u}, & \text{if \(u\in\Z\) and \(0\le u\le h\),}\\
0, & \text{otherwise.}
\end{cases}
\]
In particular, when \(u=\frac{rn+h}{2}\), the definition enforces integrality as well as the standard range condition.
\end{definition}

The next proposition (cf.\ \cite{CHJSV23}) provides the binomial-type evaluation we use throughout.

\medskip

\begin{proposition}
\label{prop:cos-powers}
Let \(n\ge 1\) and \(h\ge 0\) be integers. Define
\[
C(n,h)=\sum_{k=0}^{n-1}\cos^{h}\!\Bigl(\frac{2\pi k}{n}\Bigr),
\qquad
S(n,h)=\sum_{k=0}^{n-1}\sin^{h}\!\Bigl(\frac{2\pi k}{n}\Bigr).
\]
Then
\begin{align}
C(n,h)
&=
2^{-h}\,n
\sum_{r=-\lfloor h/n\rfloor}^{\lfloor h/n\rfloor}
\binompar{h}{\frac{rn+h}{2}},
\label{eq:Cmh-binomial}
\\[2mm]
S(n,h)
&=
2^{-h}\,n
\sum_{r=-\lfloor h/n\rfloor}^{\lfloor h/n\rfloor}
\ii^{\,rn}\,
\binompar{h}{\frac{rn+h}{2}},
\label{eq:Smh-binomial}
\end{align}
where \(\ii^2=-1\).
\end{proposition}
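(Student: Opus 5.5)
The proof expands powers by the binomial theorem and then uses orthogonality of \(n\)-th roots of unity. Write \(\omega=e^{2\pi\ii/n}\) and \(\theta_k=2\pi k/n\), so that \(\cos\theta_k=(\omega^k+\omega^{-k})/2\). Expanding gives
\[
\cos^h\theta_k=2^{-h}\sum_{j=0}^{h}\binom{h}{j}\omega^{k(2j-h)}.
\]
Summing over \(k=0,\dots,n-1\) and exchanging the two finite sums, the inner sum \(\sum_{k=0}^{n-1}\omega^{k(2j-h)}\) equals \(n\) when \(n\mid 2j-h\) and \(0\) otherwise. Hence
\[
C(n,h)=2^{-h}n\sum_{\substack{0\le j\le h\\ 2j-h\equiv 0\ (n)}}\binom{h}{j}.
\]

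Next I reindex by \(r\), defined through \(2j-h=rn\), i.e.\ \(j=(rn+h)/2\). The map \(j\mapsto r\) is a bijection from the admissible \(j\) onto those integers \(r\) for which \((rn+h)/2\) is an integer in \([0,h]\). This is exactly the support of \(\binompar{h}{\frac{rn+h}{2}}\) as given in Definition~\ref{def:parity-binomial}. Since \(|rn|=|2j-h|\le h\), every contributing \(r\) satisfies \(|r|\le\lfloor h/n\rfloor\). So summing over \(r\) from \(-\lfloor h/n\rfloor\) to \(\lfloor h/n\rfloor\) with the parity binomial loses no terms and adds only zeros, which gives \eqref{eq:Cmh-binomial}.

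For the sine sum I use \(\sin\theta_k=(\omega^k-\omega^{-k})/(2\ii)\). This gives
\[
\sin^h\theta_k=(2\ii)^{-h}\sum_{j}\binom{h}{j}(-1)^{h-j}\omega^{k(2j-h)}.
\]
The same orthogonality step and the same reindexing by \(r\) produce the prefactor \(2^{-h}n\) together with the sign factor \(\ii^{-h}(-1)^{h-j}\), where \(j=(rn+h)/2\).

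The only real bookkeeping obstacle is simplifying this unit to \(\ii^{rn}\). Since \(-1=\ii^2\), we have \((-1)^{h-j}=\ii^{2h-2j}=\ii^{2h-(rn+h)}=\ii^{h-rn}\). Multiplying by \(\ii^{-h}\) leaves \(\ii^{-rn}\). Finally, \(rn=2j-h\equiv h\pmod 2\) on the support, and the formula is symmetric under \(r\mapsto -r\) because \(\binom{h}{j}=\binom{h}{h-j}\). Swapping \(j\) with \(h-j\) sends \(r\) to \(-r\), so \(\ii^{-rn}\) may be replaced by \(\ii^{rn}\) term by term after reindexing. This yields \eqref{eq:Smh-binomial}.

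Two consistency checks support the sign conventions. For odd \(h\) the sine sum should vanish, and it does, because the terms for \(r\) and \(-r\) cancel: when \(h\) is odd, \(rn\) is odd and \(\ii^{rn}=-\ii^{-rn}\). For \(h=0\) the formula gives \(n\), as it should.
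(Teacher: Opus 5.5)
Your proof is correct and complete. The binomial expansion, the orthogonality $\sum_{k=0}^{n-1}\omega^{k(2j-h)}\in\{0,n\}$, the bijection $j\leftrightarrow r$ with $|r|\le\lfloor h/n\rfloor$, and the sign bookkeeping $\ii^{-h}(-1)^{h-j}=\ii^{-rn}$ all check out. The final reflection $r\mapsto -r$ is also valid, because $\binompar{h}{\frac{h-rn}{2}}=\binompar{h}{\frac{h+rn}{2}}$ and the range of $r$ is symmetric.

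The paper gives no proof of its own. It quotes the result from \cite{CHJSV23}, where identities of this type come from a discrete heat-kernel argument. There, the discrete-time heat kernel on the $n$-cycle is computed twice: once spectrally, which produces the cosine power sums, and once by lifting walks to $\Z$, where $\binompar{h}{\frac{rn+h}{2}}$ counts length-$h$ walks from $0$ to $rn$.

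Your root-of-unity filter is the finite Fourier orthogonality that makes these two expressions agree, carried out directly. It is shorter and self-contained. Your intermediate form $C(n,h)=2^{-h}n\sum_{n\mid 2j-h}\binom{h}{j}$ also makes the $r=0$ collapse used in Lemma~\ref{lem:Ph-stable} immediate: for $n>h$, the bound $|2j-h|\le h<n$ forces $2j=h$. The heat-kernel packaging, for its part, places the identity in a framework that extends to character-twisted sums, which is the variant the paper's conclusion points to.

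A minor remark: the reflection step is genuinely needed only for odd $h$. For even $h$, $rn$ is even on the support, so $\ii^{-rn}=\ii^{rn}$ already holds term by term.
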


\section{Inverse-limit formulas and cosine-point evaluation}
\label{sec:formulas}

At each level \(n\), our evaluations involve the \((n-1)\)-tuple of punctured cosine values
\[
\alpha_{k,n}=\cos\!\Bigl(\frac{2\pi k}{n}\Bigr),
\qquad 1\le k\le n-1.
\]
Since the number of variables depends on \(n\), it is natural to organize \(n\)-dependent expressions as compatible
families of polynomials, rather than as a single polynomial in infinitely many variables. This leads to an
inverse-limit formalism.

In many natural constructions, the polynomial at level \(n\) involves the number of variables
\(m=n-1\) explicitly as a coefficient. For instance, the quadratic energy satisfies
\[
E_m(x_1,\dots,x_m)
=
m\sum_{j=1}^m x_j^2-\Bigl(\sum_{j=1}^m x_j\Bigr)^2,
\]
so the coefficient of \(\sum_j x_j^2\) is \(m\), and hence varies with the level.
To encode such level-dependent coefficients uniformly, we introduce an auxiliary indeterminate \(z\) and work over
\(\Q[z]\): we consider the family
\[
\widetilde E_m(x_1,\dots,x_m)
=
z\sum_{j=1}^m x_j^2-\Bigl(\sum_{j=1}^m x_j\Bigr)^2,
\]
which lies in \(\Q[z][x_1,\dots,x_m]\), and we recover \(E_m\) by specializing \(z=m\).
In our cyclotomic applications \(m=n-1\), so the evaluation step specializes \(z=n-1\).
Thus \(\Q[z]\) is a natural coefficient ring for identities whose coefficients vary polynomially with the level while
remaining compatible with truncation.

\subsection{Polynomial formulas}

Let
\[
R=\Q[z][x_1,x_2,x_3,\dots]
\]
be the polynomial ring in countably many variables. For each integer \(m\ge 1\), define the ideal
\[
I_m=(x_{m+1},x_{m+2},\dots)\subset R,
\]
so that \(R/I_m\cong \Q[z][x_1,\dots,x_m]\).
The quotient map induces a truncation homomorphism
\[
\phi_m:R/I_{m+1}\longrightarrow R/I_m,
\qquad x_{m+1}\longmapsto 0.
\]

\begin{definition}
\label{def:poly-formula}
A \emph{polynomial formula} is an element of the inverse limit
\[
\mathcal F_{\mathrm{poly}}
=
\varprojlim_m R/I_m.
\]
Equivalently, it is a family \(F=(F_m)_{m\ge 1}\) with
\[
F_m\in\Q[z][x_1,\dots,x_m]
\quad\text{such that}\quad
F_{m+1}(x_1,\dots,x_m,0)=F_m(x_1,\dots,x_m)
\]
for all \(m\ge 1\).
\end{definition}

Thus a polynomial formula is a level-compatible rule that assigns to each \(m\) a polynomial in \(m\) variables.

\begin{remark}
A polynomial formula need not arise from a single element of \(R\).
For example, the compatible family
\[
F_m(x_1,\dots,x_m)=\sum_{j=1}^m x_j^{\,j}
\]
defines an element of \(\mathcal F_{\mathrm{poly}}\), although no polynomial in \(R\) represents it.
\end{remark}

\subsection{Cosine-point evaluation}

Given a polynomial formula, we next define the operation that produces its level-dependent numerical values by
evaluating at punctured cosine points and specializing \(z=n-1\).

\begin{definition}
\label{def:cosine-eval}
Let \(F=(F_m)_{m\ge 1}\in\mathcal F_{\mathrm{poly}}\).
For each integer \(n\ge 2\), define its \emph{cosine-point evaluation} by
\[
a_F(n)
=
F_{n-1}\bigl(\alpha_{1,n},\dots,\alpha_{n-1,n}\bigr)\Big|_{z=n-1}.
\]
\end{definition}

This produces a numerical sequence \(a_F:\{n\ge 2\}\to\C\).

\subsection{Polynomial summability}

\begin{definition}
\label{def:poly-summable}
A polynomial formula \(F\in\mathcal F_{\mathrm{poly}}\) is said to be
\emph{eventually polynomially summable at cosine points}
if there exist an integer \(N_\ast\ge 2\) and a polynomial \(R(n)\in\Q[n]\) such that
\[
a_F(n)=R(n)
\qquad\text{for all integers }n\ge N_\ast.
\]
If one may take \(N_\ast=2\), then \(F\) is called \emph{strongly polynomially summable}.
\end{definition}

This notion is intentionally rigid: stabilization must occur to a \emph{single} polynomial function of \(n\).

\section{Universal invariants and admissible formulas}
\label{sec:admissible}
To describe stable-range behavior, it is convenient to isolate a small collection of universal invariants.
We begin by introducing the additive and multiplicative quantities that control cosine-point evaluations.

\subsection{Additive invariants}

\begin{definition}
\label{def:Ph}
For integers \(h\ge 0\) and \(n\ge 2\), define the \emph{punctured cosine power sums}
\[
P_h(n)=\sum_{k=1}^{n-1} (2\alpha_{k,n})^h .
\]
In particular, \(P_0(n)=n-1\).
\end{definition}

These invariants encode all additive symmetric information of bounded degree
in the punctured cosine configuration.

\subsection{Multiplicative invariants}

\begin{definition}
\label{def:MQ-product}
Let \(Q(z,t)\in\Q[z][[t]]\) with \(Q(z,0)=1\).
When \(Q\) is a polynomial in \(t\), define the associated multiplicative invariant
\[
M_Q(n)=\prod_{k=1}^{n-1} Q(n-1,\alpha_{k,n})
\qquad(n\ge 2).
\]
\end{definition}

Such invariants capture residual cyclotomic dependence that need not be polynomial in \(n\).

\subsection{Admissible formulas}
We now specify the class of polynomial formulas to which our rigidity results apply.
These are families with controlled algebraic complexity, consisting of a bounded-degree symmetric part together with (optional) fixed product factors.

\begin{definition}
\label{def:admissible}
Fix \(d\ge 0\).

\smallskip
\noindent
\textup{(a) Polynomial case.}
A polynomial formula \(F=(F_m)\) is called
\emph{admissible of \(x\)-degree \(\le d\)}
if for all \(m\ge 1\),
\[
F_m\in\Q[z][x_1,\dots,x_m]^{S_m},
\qquad
\deg_x(F_m)\le d.
\]

\smallskip
\noindent
\textup{(b) Product case.}
Let \(\mathbf Q=(Q_1,\dots,Q_r)\) with \(Q_i(z,0)=1\), and
\(\mathbf m=(m_1,\dots,m_r)\in\Z_{\ge0}^r\).
Given an admissible polynomial formula \(A=(A_m)\),
define
\[
F_m(x)
=
A_m(x)
\prod_{i=1}^r
\Bigl(\prod_{j=1}^m Q_i(z,x_j)\Bigr)^{m_i}.
\]
Such an \(F\) is called an
\emph{admissible product formula with bounded-degree symmetric numerator}.
\end{definition}

\section{Standard facts on symmetric polynomials}
\label{sec:symmetric-facts}

We record two classical facts that will be used repeatedly.

\begin{lemma}[Power-sum presentation in bounded degree]
\label{lem:powersum-presentation}
Fix integers \(d\ge 0\) and \(m\ge 1\).
Every symmetric polynomial
\(
G\in\Q[z][x_1,\dots,x_m]^{S_m}
\)
of total \(x\)-degree \(\le d\) can be written in the form
\[
G(x)=\Psi\bigl(p_1(x),\dots,p_d(x)\bigr),
\qquad
p_r(x)=\sum_{j=1}^m x_j^{\,r},
\]
for some polynomial \(\Psi\in\Q[z][v_1,\dots,v_d]\).
Moreover, such a presentation can be obtained algorithmically by standard elimination in the algebra of symmetric polynomials; see, e.g., \cite[Chap.~7]{StanleyEC2}.
\end{lemma}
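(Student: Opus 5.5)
The plan is to reduce to the classical statement over a field and then handle the coefficient ring \(\Q[z]\) and the degree bound by bookkeeping.

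\emph{Step 1: reduce to homogeneous pieces.} Write \(G=\sum_{e=0}^{d}G_e\), where \(G_e\) is the part of total \(x\)-degree exactly \(e\). The \(S_m\)-action permutes the variables and so preserves \(x\)-degree. Hence each \(G_e\) is again symmetric. It therefore suffices to treat a symmetric \(G\) that is homogeneous of \(x\)-degree \(e\le d\).

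\emph{Step 2: use a monomial-symmetric basis.} Write \(G=\sum_\lambda c_\lambda(z)\,m_\lambda\). Here \(\lambda\) runs over partitions of \(e\) with at most \(m\) parts, \(m_\lambda\) is the monomial symmetric polynomial in \(x_1,\dots,x_m\), and \(c_\lambda\in\Q[z]\). This holds because the coefficients of a symmetric polynomial are constant on \(S_m\)-orbits of monomials. The problem now separates: it is enough to express each \(m_\lambda\), with \(|\lambda|=e\le d\), as a polynomial with \emph{rational} coefficients in \(p_1,\dots,p_e\). The coefficients \(c_\lambda(z)\) then combine to give \(\Psi\in\Q[z][v_1,\dots,v_d]\).

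\emph{Step 3: express \(m_\lambda\) through power sums.} First, every \(m_\lambda\) with \(|\lambda|=e\) is a \(\Z\)-combination of the elementary products \(e_\mu=e_{\mu_1}\cdots e_{\mu_\ell}\) with \(|\mu|=e\). This follows from the fundamental theorem of symmetric polynomials, proved by induction on dominance order via leading monomials. Each such \(e_\mu\) involves only \(e_k\) with \(k\le e\). Next, Newton's identities
\[
k\,e_k=\sum_{i=1}^{k}(-1)^{i-1}e_{k-i}\,p_i\qquad(1\le k\le m)
\]
express each \(e_k\), by induction on \(k\), as a polynomial over \(\Q\) in \(p_1,\dots,p_k\). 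This is where division by \(k\) is needed, so we must work over \(\Q\). Since \(k\le e\le d\), only \(p_1,\dots,p_d\) appear. When \(k>m\) we have \(e_k=0\); such factors do not occur anyway, because \(\mu\) comes from partitions with parts of size at most \(m\) in the dual sense. If one prefers, one may simply use the identities for \(k\le\min(m,d)\).

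\emph{Main obstacle.} The delicate point is Step 3 when \(m<d\). The expression is then not unique, since the \(p_r\) are algebraically dependent. Existence still holds, because we only need \(e_k\) for \(k\le m\), and these are polynomials in \(p_1,\dots,p_k\) with \(k\le m\le d\). The algorithmic claim follows because each step is an explicit triangular elimination, as in the cited treatment of Stanley.
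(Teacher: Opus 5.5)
Your proof is correct and has the same outline as the paper's: you prove the statement over \(\Q\) and then extend \(\Q[z]\)-linearly. Your expansion \(G=\sum_\lambda c_\lambda(z)\,m_\lambda\) is an explicit form of the paper's step of tensoring with \(\Q[z]\). The one difference is that you prove the classical fact over \(\Q\) yourself, via the fundamental theorem and Newton's identities, correctly noting that only \(e_k\) with \(k\le\min(m,d)\) are needed; the paper simply cites Stanley for that fact.
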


\begin{proof}
Over \(\Q\), it is classical that for fixed \(d\), every symmetric polynomial in \(m\) variables of total degree
\(\le d\) lies in the subalgebra \(\Q[p_1,\dots,p_d]\); see \cite[Chap.~7]{StanleyEC2}.
Equivalently, the \(\Q\)-algebra homomorphism
\[
\theta_{\Q}:\ \Q[v_1,\dots,v_d]\longrightarrow \Q[x_1,\dots,x_m]^{S_m},
\qquad
v_r\longmapsto p_r(x),
\]
is surjective onto the \(\Q\)-subspace of symmetric polynomials of total degree \(\le d\).
Tensoring with \(\Q[z]\) over \(\Q\) preserves surjectivity, so the induced map
\[
\theta_{\Q[z]}:\ \Q[z][v_1,\dots,v_d]\longrightarrow \Q[z][x_1,\dots,x_m]^{S_m}
\]
is surjective onto the \(\Q[z]\)-submodule of symmetric polynomials of total degree \(\le d\).
Thus every \(G\in\Q[z][x_1,\dots,x_m]^{S_m}\) with \(\deg_x(G)\le d\) admits a representation as stated.
\end{proof}

\begin{lemma}[Stable uniqueness]
\label{lem:stable-uniqueness}
Fix integers \(d\ge 0\) and \(m\ge d\).
Suppose \(\Psi,\Psi'\in\Q[z][v_1,\dots,v_d]\) satisfy
\[
\Psi\bigl(p_1(x),\dots,p_d(x)\bigr)
=
\Psi'\bigl(p_1(x),\dots,p_d(x)\bigr)
\]
as polynomials in \(\Q[z][x_1,\dots,x_m]\).
Then \(\Psi=\Psi'\) in \(\Q[z][v_1,\dots,v_d]\).
\end{lemma}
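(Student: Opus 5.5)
The plan is to reduce the statement to the algebraic independence of \(p_1,\dots,p_d\) in \(\Q[x_1,\dots,x_m]\) when \(m\ge d\). First, set \(\Delta=\Psi-\Psi'\in\Q[z][v_1,\dots,v_d]\). The hypothesis then says \(\Delta(p_1,\dots,p_d)=0\) in \(\Q[z][x_1,\dots,x_m]\). Expand \(\Delta=\sum_{k} z^k\Delta_k\) with \(\Delta_k\in\Q[v_1,\dots,v_d]\). Since \(\Q[z][x_1,\dots,x_m]=\bigoplus_k z^k\Q[x_1,\dots,x_m]\) and each \(\Delta_k(p_1,\dots,p_d)\) lies in \(\Q[x]\), comparing coefficients of \(z^k\) gives \(\Delta_k(p_1,\dots,p_d)=0\) in \(\Q[x_1,\dots,x_m]\) for every \(k\). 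So it suffices to show that the \(\Q\)-algebra map \(v_r\mapsto p_r\) from \(\Q[v_1,\dots,v_d]\) to \(\Q[x_1,\dots,x_m]\) is injective when \(d\le m\).

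For the injectivity I would use Newton's identities over \(\Q\). These express \(p_1,\dots,p_m\) and \(e_1,\dots,e_m\) polynomially in terms of each other (triangularly, with invertible rational leading coefficients \(\pm r\)), so \(\Q[p_1,\dots,p_m]=\Q[e_1,\dots,e_m]\), and the corresponding substitution is an automorphism of the polynomial ring in \(m\) variables. By the fundamental theorem of symmetric polynomials, \(e_1,\dots,e_m\) are algebraically independent over \(\Q\). Because the change of variables between the \(e\)'s and the \(p\)'s is invertible, \(p_1,\dots,p_m\) are algebraically independent as well, and hence so is any subset \(p_1,\dots,p_d\) with \(d\le m\).

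A more self-contained alternative is the Jacobian criterion, which holds in characteristic \(0\). The Jacobian matrix \(\bigl(\partial p_r/\partial x_j\bigr)_{1\le r\le d,\,1\le j\le d}=\bigl(r x_j^{r-1}\bigr)\) has determinant \(d!\prod_{i<j}(x_j-x_i)\neq 0\). So the \(d\times m\) Jacobian has rank \(d\), which gives algebraic independence.

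The main point needing care is the case \(d=0\), which is trivial since both sides are constants in \(\Q[z]\), together with making sure the hypothesis \(m\ge d\) is exactly what is used. The step that carries real content is the algebraic independence of \(e_1,\dots,e_m\), which is classical. I would cite it from \cite[Chap.~7]{StanleyEC2} rather than reprove it. With injectivity in hand, each \(\Delta_k=0\), so \(\Delta=0\) and \(\Psi=\Psi'\).
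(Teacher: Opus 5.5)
Your proposal is correct and follows essentially the same route as the paper: the paper also reduces to the algebraic independence of \(p_1,\dots,p_d\) over \(\Q\) for \(m\ge d\) (citing Stanley and the Jacobian criterion), then passes to \(\Q[z]\)-coefficients via flatness of \(\Q[z]\) over \(\Q\), which is the abstract form of your comparison of \(z^k\)-coefficients. Your Newton-identity argument is simply a more explicit justification of that same classical independence fact.
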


\begin{proof}
For \(m\ge d\), the power sums \(p_1,\dots,p_d\) are algebraically independent over \(\Q\)
(see \cite[Chap.~7]{StanleyEC2}) (e.g. by Jacobian criterion at a point with distinct coordinates). Equivalently, the \(\Q\)-algebra homomorphism
\[
\theta_{\Q}:\ \Q[v_1,\dots,v_d]\longrightarrow \Q[x_1,\dots,x_m],
\qquad v_r\longmapsto p_r(x),
\]
is injective.
Since \(\Q[z]\) is a free (hence flat) \(\Q\)-module, tensoring with \(\Q[z]\) preserves injectivity, so the induced map
\[
\theta_{\Q[z]}:\ \Q[z][v_1,\dots,v_d]\longrightarrow \Q[z][x_1,\dots,x_m]
\]
is injective as well. If \(\Psi(p_1,\dots,p_d)=\Psi'(p_1,\dots,p_d)\), then
\(\theta_{\Q[z]}(\Psi-\Psi')=0\), hence \(\Psi-\Psi'=0\) by injectivity.
\end{proof}

\begin{remark}[Origin of the stable range \(n\ge d+2\)]
\label{rem:origin-stable-range}
The stable range \(n\ge d+2\) arises from two independent constraints.

\smallskip
\noindent\textbf{(1) Uniqueness of power-sum reduction.}
At cyclotomic level \(n\), the evaluation uses \(m=n-1\) variables.
To ensure that a symmetric polynomial of total \(x\)-degree \(\le d\) has a \emph{unique} expression in terms of
\(p_1,\dots,p_d\), we require \(m\ge d\), i.e.\ \(n-1\ge d\), or equivalently \(n\ge d+1\)
(Lemma~\ref{lem:stable-uniqueness}).

\smallskip
\noindent\textbf{(2) Stable formulas for \(P_h(n)\).}
Lemma~\ref{lem:Ph-stable} gives a closed formula for \(P_h(n)\) under the strict inequality \(n>h\).
To use these formulas simultaneously for all \(1\le h\le d\), it suffices to assume \(n>d\).

\smallskip
Combining (1) and (2) with a single uniform hypothesis gives
\[
n\ge d+2,
\]
which guarantees both \(m=n-1\ge d\) and \(n>d\). This is the threshold at which the rigidity argument becomes
uniformly valid.
\end{remark}

\section{Structural reduction in the stable range}
\label{sec:structural-reduction}

\begin{theorem}[Stable-range rigidity]
\label{thm:structural}
Let \(F\) be an admissible symmetric product formula of \(x\)-degree \(\le d\)
with product datum \((\mathbf Q,\mathbf m)\).
Let \(a_F(n)\) denote its cosine-point evaluation.

Set \(N_\ast=d+2\).
Then there exists a polynomial
\[
\Phi\in\Q[z][u_1,\dots,u_d]
\]
such that for all integers \(n\ge N_\ast\),
\[
a_F(n)
=
\Phi\bigl(P_1(n),\dots,P_d(n)\bigr)\Big|_{z=n-1}
\cdot
\prod_{i=1}^r M_{Q_i}(n)^{m_i}.
\]
\end{theorem}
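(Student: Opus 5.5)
The plan is to separate the product factors from the symmetric numerator and treat the numerator through the power-sum machinery of Section~\ref{sec:symmetric-facts}. By Definition~\ref{def:admissible}(b), at level \(m=n-1\) the formula is
\[
F_{n-1}(x)=A_{n-1}(x)\prod_{i=1}^r\Bigl(\prod_{j=1}^{n-1}Q_i(z,x_j)\Bigr)^{m_i}.
\]
Evaluation at \(x_j=\alpha_{j,n}\) followed by \(z=n-1\) is a ring homomorphism \(\Q[z][x_1,\dots,x_{n-1}]\to\C\). Hence \(a_F(n)\) factors as \(a_A(n)\cdot\prod_i M_{Q_i}(n)^{m_i}\), directly from Definition~\ref{def:MQ-product}. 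So it suffices to prove the statement for the admissible polynomial formula \(A\), with the product equal to \(1\).

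For \(A\), I will produce a single \(\Psi\in\Q[z][v_1,\dots,v_d]\) that works for every level \(m\ge d\). By Lemma~\ref{lem:powersum-presentation}, each \(A_m\) has a presentation \(A_m=\Psi_m(p_1,\dots,p_d)\). I then use truncation compatibility. Setting \(x_{m+1}=0\) sends \(p_r^{(m+1)}\mapsto p_r^{(m)}\) for \(r\ge1\), so
\[
A_m=A_{m+1}|_{x_{m+1}=0}=\Psi_{m+1}\bigl(p_1^{(m)},\dots,p_d^{(m)}\bigr).
\]
By Lemma~\ref{lem:stable-uniqueness}, for \(m\ge d\) this forces \(\Psi_{m+1}=\Psi_m\). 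Thus \(\Psi:=\Psi_d\) represents \(A_m\) for all \(m\ge d\).

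This step is the crux, and it is where the threshold matters. Without the independence of \(\Psi\) from \(m\), the conclusion would be vacuous, since each level alone trivially admits some presentation. I expect the main subtlety to be checking that the power sums restrict correctly under truncation. They do so precisely because \(p_r\) has no constant term for \(r\ge1\), and \(p_0\) is never used.

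Finally I rescale to match the normalization \(P_h(n)=\sum_k(2\alpha_{k,n})^h\). For \(n\ge d+2\) we have \(m=n-1\ge d\), and
\[
p_r(\alpha_{1,n},\dots,\alpha_{n-1,n})=2^{-r}P_r(n).
\]
Therefore
\[
a_A(n)=\Psi\bigl(P_1(n)/2,\dots,P_d(n)/2^d\bigr)\big|_{z=n-1}.
\]
I set \(\Phi(u_1,\dots,u_d)=\Psi(u_1/2,\dots,u_d/2^d)\in\Q[z][u_1,\dots,u_d]\) and multiply back by the product factor. Only \(m\ge d\), i.e. \(n\ge d+1\), is really used here. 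The stated threshold \(N_\ast=d+2\) is stronger, so the hypothesis is amply satisfied, consistent with Remark~\ref{rem:origin-stable-range}.
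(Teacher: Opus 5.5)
Your proposal is correct and follows the paper's proof essentially step for step. You peel off the \(M_{Q_i}(n)^{m_i}\) factors, obtain a level-independent \(\Psi\) from Lemma~\ref{lem:powersum-presentation}, truncation-compatibility and Lemma~\ref{lem:stable-uniqueness} for \(m\ge d\), and then rescale via \(p_h(\alpha_{1,n},\dots,\alpha_{n-1,n})=2^{-h}P_h(n)\).

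Your remark about the threshold is accurate and can be sharpened further. Truncating \(A_M=\Psi(p_1,\dots,p_d)\) from any \(M\ge d\) down to an arbitrary level \(m\) shows that the same \(\Psi\) represents every \(A_m\), so the factorization holds for all \(n\ge 2\). The lower bound on \(n\) only matters later, when Lemma~\ref{lem:Ph-stable} is used.
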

\begin{proof}
Write
\[
F_m(x)=A_m(x)\prod_{i=1}^r
\Bigl(\prod_{j=1}^m Q_i(z,x_j)\Bigr)^{m_i},
\qquad
\deg_x(A_m)\le d.
\]
Since each $Q_i(z,0)=1$ and $F$ is truncation-compatible, we have
\[
A_{m+1}(x_1,\dots,x_m,0)=A_m(x_1,\dots,x_m)
\qquad(m\ge 1).
\]

Fix \(m\ge d\). By Lemma~\ref{lem:powersum-presentation} there exists
\(\Psi_m\in\Q[z][v_1,\dots,v_d]\) such that
\[
A_m(x)=\Psi_m\bigl(p_1(x),\dots,p_d(x)\bigr).
\]
Truncation-compatibility of \(A\) implies, as polynomials in \(\Q[z][x_1,\dots,x_m]\),
\[
\Psi_{m+1}(p_1,\dots,p_d)=\Psi_m(p_1,\dots,p_d).
\]
Since \(m\ge d\), Lemma~\ref{lem:stable-uniqueness} yields \(\Psi_{m+1}=\Psi_m\).
Hence there exists a single polynomial \(\Psi^\ast\in\Q[z][v_1,\dots,v_d]\) such that
\[
A_m(x)=\Psi^\ast(p_1(x),\dots,p_d(x))
\qquad\text{for all }m\ge d.
\]

Now let \(n\ge d+2\) and set \(m=n-1\), so \(m\ge d\). Evaluating at
\(x_k=\alpha_{k,n}\) gives, for each \(1\le h\le d\),
\[
p_h(\alpha_{1,n},\dots,\alpha_{n-1,n})
=\sum_{k=1}^{n-1}\alpha_{k,n}^h
=2^{-h}P_h(n).
\]
Define
\[
\Phi(u_1,\dots,u_d)
=
\Psi^\ast\!\Bigl(\frac{u_1}{2},\frac{u_2}{2^2},\dots,\frac{u_d}{2^d}\Bigr)
\in \Q[z][u_1,\dots,u_d].
\]
Substituting into the factorization of \(F_{n-1}\) and specializing \(z=n-1\)
yields
\[
a_F(n)
=
\Phi\bigl(P_1(n),\dots,P_d(n)\bigr)\Big|_{z=n-1}
\cdot
\prod_{i=1}^r M_{Q_i}(n)^{m_i},
\]
as claimed.
\end{proof}

\section{Stable-range evaluation of punctured cosine sums}
\label{sec:stable-Ph}
In order to make our structural reduction theorem explicit, we need closed formulas for the universal invariants
\(P_h(n)\) in the stable range.
The next lemma shows that once the cyclotomic level \(n\) exceeds the exponent \(h\), the punctured cosine power sums
simplify and become linear functions of \(n\).

\begin{lemma}
\label{lem:Ph-stable}
Let \(h\ge 0\) and \(n>h\). Then
\[
P_h(n)=
\begin{cases}
n\binom{h}{h/2}-2^h, & \text{if \(h\) is even},\\[4pt]
-2^h, & \text{if \(h\) is odd}.
\end{cases}
\]
\end{lemma}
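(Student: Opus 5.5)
We will derive the lemma from Proposition~\ref{prop:cos-powers}, or more transparently from the expansion it encodes. Write \(2\alpha_{k,n}=\omega^k+\omega^{-k}\) with \(\omega=e^{2\pi\ii/n}\). The binomial theorem then gives
\[
(2\alpha_{k,n})^h=\sum_{j=0}^h\binom{h}{j}\omega^{k(2j-h)}.
\]
Summing over the full range \(0\le k\le n-1\), the orthogonality relation \(\sum_{k=0}^{n-1}\omega^{kt}=n\cdot[\,n\mid t\,]\) yields
\[
\sum_{k=0}^{n-1}(2\alpha_{k,n})^h=n\sum_{\substack{0\le j\le h\\ n\mid 2j-h}}\binom{h}{j}.
\]
This is exactly \(2^hC(n,h)\) in the form \eqref{eq:Cmh-binomial}, with \(2j-h=rn\), i.e.\ \(j=\frac{rn+h}{2}\).

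Next we use the hypothesis \(n>h\). Since \(|2j-h|\le h<n\), the divisibility condition \(n\mid 2j-h\) forces \(2j-h=0\). This is equivalent to \(\lfloor h/n\rfloor=0\), so only the term \(r=0\) survives in \eqref{eq:Cmh-binomial}. The parity-restricted coefficient \(\binompar{h}{h/2}\) from Definition~\ref{def:parity-binomial} then equals \(\binom{h}{h/2}\) when \(h\) is even and \(0\) when \(h\) is odd. The full sum is therefore \(n\binom{h}{h/2}\) for even \(h\) and \(0\) for odd \(h\).

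Finally, we remove the puncture. The omitted \(k=0\) term contributes \((2\cos 0)^h=2^h\), so \(P_h(n)\) equals the full sum minus \(2^h\), which is the stated formula.

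The only delicate point is the edge case \(h=0\). Here \(P_0(n)=n-1\), and this agrees with \(n\binom{0}{0}-1\), so it causes no trouble. Apart from that, the main step is the bookkeeping showing that \(n>h\) kills every \(r\neq 0\) term. The strict inequality is necessary: when \(n=h\), the terms \(r=\pm1\) contribute, so the hypothesis is sharp.
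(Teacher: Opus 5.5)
Your proof is correct and follows the paper's route. You use \(n>h\) to reduce the binomial formula of Proposition~\ref{prop:cos-powers} to its \(r=0\) term, read off \(\binompar{h}{h/2}\) by parity, and subtract the omitted \(k=0\) contribution \(2^h\) (the paper writes this as \(P_h(n)=2^h(C(n,h)-1)\)); the only difference is that you rederive the binomial formula from root-of-unity orthogonality instead of citing it, which makes your argument self-contained.
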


\begin{proof}
Recall
\[
C(n,h)=\sum_{k=0}^{n-1}\cos^h\!\Bigl(\frac{2\pi k}{n}\Bigr).
\]
Since \(n>h\), we have \(\lfloor h/n\rfloor=0\), so in
Proposition~\ref{prop:cos-powers} the sum over \(r\) has only the term \(r=0\). Hence
\[
C(n,h)=2^{-h}n\binompar{h}{h/2}.
\]
In particular, \(\binompar{h}{h/2}=\binom{h}{h/2}\) if \(h\) is even and \(0\) if \(h\) is odd, so
\[
C(n,h)=2^{-h}n\binom{h}{h/2}\ \text{if \(h\) is even},\qquad
C(n,h)=0\ \text{if \(h\) is odd}.
\]

On the other hand, writing \(\alpha_{k,n}=\cos(2\pi k/n)\) we have
\[
C(n,h)=1+\sum_{k=1}^{n-1}\alpha_{k,n}^h
\quad\text{and}\quad
P_h(n)=\sum_{k=1}^{n-1}(2\alpha_{k,n})^h
=2^h\sum_{k=1}^{n-1}\alpha_{k,n}^h,
\]
so
\[
P_h(n)=2^h\bigl(C(n,h)-1\bigr).
\]
Substituting the above value of \(C(n,h)\) gives the stated formulas.
\end{proof}

\section{Consequences in the polynomial case}
\label{sec:poly-consequences}
We now specialize Theorem~\ref{thm:structural} to the purely polynomial setting, where no multiplicative factors are present.
In this case stable-range rigidity forces eventual polynomial behavior in the cyclotomic level.

\begin{corollary}[Eventual polynomiality]
\label{cor:eventual-polynomial}
Let \(F\) be an admissible symmetric polynomial formula of \(x\)-degree \(\le d\).
Then there exists a polynomial \(R_\infty(n)\in\Q[n]\) such that
\[
a_F(n)=R_\infty(n)
\qquad\text{for all integers }n\ge d+2.
\]
\end{corollary}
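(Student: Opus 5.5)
The plan is to treat the corollary as the special case of Theorem~\ref{thm:structural} with no product factors ($r=0$, or all $m_i=0$), and then insert the explicit stable formulas of Lemma~\ref{lem:Ph-stable}. With no product datum, the multiplicative part $\prod_i M_{Q_i}(n)^{m_i}$ is the empty product $1$. Theorem~\ref{thm:structural} therefore supplies a single polynomial $\Phi\in\Q[z][u_1,\dots,u_d]$, independent of $n$, with
\[
a_F(n)=\Phi\bigl(P_1(n),\dots,P_d(n)\bigr)\Big|_{z=n-1}\qquad(n\ge d+2).
\]

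The next step is to make each invariant explicit. For $n\ge d+2$ and $1\le h\le d$ we have $n>h$, so Lemma~\ref{lem:Ph-stable} applies. It gives $P_h(n)=\lambda_h n-2^h$, where $\lambda_h=\binom{h}{h/2}$ for even $h$ and $\lambda_h=0$ for odd $h$. Each $P_h(n)$ is therefore the value at $n$ of the linear polynomial $\ell_h(n)=\lambda_h n-2^h\in\Q[n]$.

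We then define
\[
R_\infty(n)=\Phi\bigl(\ell_1(n),\dots,\ell_d(n)\bigr)\Big|_{z=n-1}\in\Q[n].
\]
This is a polynomial in $n$ with rational coefficients, because we substitute polynomials in $n$ for the $u_h$ and the polynomial $n-1$ for $z$ into a polynomial with rational coefficients. For every integer $n\ge d+2$, evaluating this composite at $n$ agrees with substituting the numbers $P_h(n)$ and $z=n-1$ into $\Phi$. Hence $a_F(n)=R_\infty(n)$ on the whole stable range.

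There is no real obstacle. The only point needing care is that $\Phi$ is independent of $n$, which is exactly what the stable-uniqueness step in Theorem~\ref{thm:structural} provides. One should also confirm that the threshold $d+2$ satisfies both hypotheses, namely $m=n-1\ge d$ and $n>h$ for all $h\le d$. This is the content of Remark~\ref{rem:origin-stable-range}.
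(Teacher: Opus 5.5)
Your proposal is correct and follows the paper's own proof essentially verbatim. Both specialize Theorem~\ref{thm:structural} to the case with no product factors, then use Lemma~\ref{lem:Ph-stable} (valid because $n>h$ for every $1\le h\le d$) to see that each $P_h(n)$, as well as $z=n-1$, is affine-linear in $n$, so that substituting these into the fixed $\Phi$ yields a polynomial $R_\infty(n)\in\Q[n]$.
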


\begin{proof}
Apply Theorem~\ref{thm:structural} in the case with no product factors. Then there exists
\(\Phi\in\Q[z][u_1,\dots,u_d]\) such that for all \(n\ge d+2\),
\[
a_F(n)=\Phi\bigl(P_1(n),\dots,P_d(n)\bigr)\Big|_{z=n-1}.
\]
Since \(n\ge d+2\), we have \(n>d\), hence \(n>h\) for every \(1\le h\le d\). By
Lemma~\ref{lem:Ph-stable}, each \(P_h(n)\) is an affine-linear function of \(n\) on this range, and also \(z=n-1\) is affine-linear in \(n\).
Therefore the composition
\[
n\longmapsto \Phi\bigl(P_1(n),\dots,P_d(n)\bigr)\Big|_{z=n-1}
\]
is a polynomial function of \(n\). Defining \(R_\infty(n)\) to be this polynomial proves the claim.
\end{proof}

\section{Examples}
\label{sec:examples-unified}

We illustrate the stable-range rigidity principle on representative symmetric families: elementary and complete symmetric sums, a mixed cubic correlation, the quadratic energy, and two pure product formulas.  The emphasis is on how the general structure theorem yields explicit closed forms once \(n\) is large relative to the \(x\)-degree.

\begin{example}\label{ex:elem-symmetric}
Fix an integer \(r\ge 1\). Define
\[
F^{(e_r)}_m(x_1,\dots,x_m)
=
e_r(x_1,\dots,x_m)
=
\sum_{1\le i_1<\cdots<i_r\le m} x_{i_1}\cdots x_{i_r}.
\]
Then \(F^{(e_r)}\) is an admissible symmetric polynomial formula of \(x\)-degree \(\le r\).
For \(n\ge 2\) set \(\alpha_{k,n}=\cos(2\pi k/n)\) \((1\le k\le n-1)\), and define
\[
a(n)
=
F^{(e_r)}_{n-1}(\alpha_{1,n},\dots,\alpha_{n-1,n})
=
e_r(\alpha_{1,n},\dots,\alpha_{n-1,n}).
\]
Recall that the punctured cosine power sums are
\[
P_j(n)=\sum_{k=1}^{n-1}(2\alpha_{k,n})^j\qquad(j\ge 1),
\]
so in particular
\[
p_j(\alpha_{1,n},\dots,\alpha_{n-1,n})
=
\sum_{k=1}^{n-1}\alpha_{k,n}^{\,j}
=
2^{-j}P_j(n).
\]

By the determinant form of the Girard--Newton identities (\cite[Chap.~7]{StanleyEC2}), for every \(n>r\) one has
\[
e_r(\alpha_{1,n},\dots,\alpha_{n-1,n})
=
R_\infty^{(e_r)}(n),
\]
where
\[
R_\infty^{(e_r)}(n)
=
\frac{1}{r!}\,
\det\!\begin{pmatrix}
2^{-1}P_1(n) & 1      & 0      & \cdots & 0\\
2^{-2}P_2(n) & 2^{-1}P_1(n) & 2      & \cdots & 0\\
2^{-3}P_3(n) & 2^{-2}P_2(n) & 2^{-1}P_1(n) & \ddots & \vdots\\
\vdots & \vdots & \ddots & \ddots & r-1\\
2^{-r}P_r(n) & 2^{-(r-1)}P_{r-1}(n) & \cdots & 2^{-2}P_2(n) & 2^{-1}P_1(n)
\end{pmatrix}
\in \Q[n].
\]
Here the condition \(n>r\) ensures that Lemma~\ref{lem:Ph-stable} applies simultaneously to
\(P_1(n),\dots,P_r(n)\).

Equivalently, for every integer \(n>r\),
\[
\boxed{
R_\infty^{(e_r)}(n)
=
\sum_{1\le i_1<\cdots<i_r\le n-1}
\ \prod_{\nu=1}^r \cos\!\Bigl(\frac{2\pi i_\nu}{n}\Bigr)
}.
\]
\end{example}

\begin{remark}[Concrete check: \(n=8\), \(r=5\)]
In Example~\ref{ex:elem-symmetric} we are evaluating the elementary symmetric sum with strict inequalities.
For \(n=8\), the punctured cosine values are
\[
\alpha_{k,8}=\cos\!\Bigl(\frac{2\pi k}{8}\Bigr)=\cos\!\Bigl(\frac{k\pi}{4}\Bigr),
\qquad 1\le k\le 7,
\]
so the multiset \(\{\alpha_{1,8},\dots,\alpha_{7,8}\}\) equals
\[
\left\{\frac{\sqrt2}{2},\,0,\,-\frac{\sqrt2}{2},\,-1,\,-\frac{\sqrt2}{2},\,0,\,\frac{\sqrt2}{2}\right\}.
\]
Since \(\alpha_{2,8}=\alpha_{6,8}=0\), any term in
\[
e_5(\alpha_{1,8},\dots,\alpha_{7,8})
=
\sum_{1\le i_1<\cdots<i_5\le 7}\alpha_{i_1,8}\cdots\alpha_{i_5,8}
\]
that uses index \(2\) or \(6\) vanishes.  Exactly five cosine values are nonzero, so the only nonzero contribution
comes from selecting all five nonzero indices. Therefore
\[
e_5(\alpha_{1,8},\dots,\alpha_{7,8})
=
\left(\frac{\sqrt2}{2}\right)\!\left(-\frac{\sqrt2}{2}\right)\!(-1)\!\left(-\frac{\sqrt2}{2}\right)\!\left(\frac{\sqrt2}{2}\right)
=
-\frac14,
\]
i.e.
\[
\boxed{
\sum_{1\le i_1<\cdots<i_5\le 7}
\ \prod_{\nu=1}^{5}\cos\!\Bigl(\frac{2\pi i_\nu}{8}\Bigr)
=
-\frac14.
}
\]
\end{remark}

\begin{example}[A mixed cubic power sum]\label{ex:cubic-correlation}
Define the symmetric \(x\)-degree-\(3\) family
\[
F^{(2,1)}_m(x_1,\dots,x_m)
=
\sum_{\substack{1\le i,j\le m\\ i\ne j}} x_i^2x_j
=
\Bigl(\sum_{i=1}^m x_i^2\Bigr)\Bigl(\sum_{j=1}^m x_j\Bigr)-\sum_{i=1}^m x_i^3.
\]
Thus \(F^{(2,1)}\) is an admissible symmetric polynomial formula of \(x\)-degree \(\le 3\).

For \(n\ge 2\) set \(\alpha_{k,n}=\cos(2\pi k/n)\) \((1\le k\le n-1)\) and let \(m=n-1\).
For \(n\ge 4\) one has
\[
\sum_{k=1}^{n-1}\alpha_{k,n}=-1,\qquad
\sum_{k=1}^{n-1}\alpha_{k,n}^2=\frac{n-2}{2},\qquad
\sum_{k=1}^{n-1}\alpha_{k,n}^3=-1.
\]
(The restriction \(n\ge 4\) is only used for the cubic moment.)
Hence, for every integer \(n\ge 4\),
\[
\begin{aligned}
a(n)
&=
F^{(2,1)}_{n-1}(\alpha_{1,n},\dots,\alpha_{n-1,n})\\
&=
\Bigl(\sum_{k=1}^{n-1}\alpha_{k,n}^2\Bigr)\Bigl(\sum_{k=1}^{n-1}\alpha_{k,n}\Bigr)
-\sum_{k=1}^{n-1}\alpha_{k,n}^3\\
&=
\frac{n-2}{2}\cdot(-1)-(-1)
=
\frac{4-n}{2}.
\end{aligned}
\]
Therefore the (indeed exact, for \(n\ge 4\)) evaluation is
\[
\boxed{
\frac{4-n}{2}
=
\sum_{\substack{1\le i,j\le n-1\\ i\ne j}}
\cos^2\!\Bigl(\frac{2\pi i}{n}\Bigr)\,
\cos\!\Bigl(\frac{2\pi j}{n}\Bigr)
\qquad(n\ge 4).
}
\]
\end{example}

\begin{example}\label{ex:energy-quadratic-only}
Fix an integer \(m\ge 1\) and define the \emph{quadratic energy}
\[
E_m(x_1,\dots,x_m)=\sum_{1\le i<j\le m}(x_i-x_j)^2.
\]
This is a symmetric polynomial of total \(x\)-degree \(2\).
Write the power sums
\[
p_0(x_1,\dots,x_m)=\sum_{j=1}^m 1=m,\qquad
p_1(x_1,\dots,x_m)=\sum_{j=1}^m x_j,\qquad
p_2(x_1,\dots,x_m)=\sum_{j=1}^m x_j^2.
\]
Expanding \((x_i-x_j)^2=x_i^2+x_j^2-2x_ix_j\) and summing gives
\[
E_m(x)=m\,p_2(x)-p_1(x)^2=p_0(x)\,p_2(x)-p_1(x)^2.
\]
Define the symmetric family
\[
\widetilde E_m(x_1,\dots,x_m)
=
z\sum_{i=1}^m x_i^{2}
-\Bigl(\sum_{i=1}^m x_i\Bigr)^{2}.
\]
Then specializing \(z=p_0(x)=m\) recovers \(E_m\):
\[
\widetilde E_m(x)\Big|_{z=m}=E_m(x).
\]

For \(n\ge 3\) set \(m=n-1\) and \(\alpha_{k,n}=\cos(2\pi k/n)\) \((1\le k\le n-1)\).
Specializing \(x_k=\alpha_{k,n}\) yields
\[
a(n)=E_{n-1}(\alpha_{1,n},\dots,\alpha_{n-1,n})
=(n-1)\sum_{k=1}^{n-1}\alpha_{k,n}^2-\Bigl(\sum_{k=1}^{n-1}\alpha_{k,n}\Bigr)^{\!2}.
\]
The standard sums
\[
\sum_{k=1}^{n-1}\cos\!\Bigl(\frac{2\pi k}{n}\Bigr)=-1,
\qquad
\sum_{k=1}^{n-1}\cos^2\!\Bigl(\frac{2\pi k}{n}\Bigr)=\frac{n-2}{2}
\qquad(n\ge 3)
\]
give
\[
a(n)=(n-1)\cdot\frac{n-2}{2}-(-1)^2=\frac{n(n-3)}{2}.
\]
Equivalently, for every \(n\ge 3\),
\begin{equation}\label{eq:energy-cosine-boxed}
\boxed{
\frac{n(n-3)}{2}
=
\sum_{1\le i<j\le n-1}
\left(
\cos\!\Bigl(\frac{2\pi i}{n}\Bigr)
-
\cos\!\Bigl(\frac{2\pi j}{n}\Bigr)
\right)^{\!2}
}.
\end{equation}
\end{example}

\begin{example}\label{ex:pure-product_linear}
This example illustrates a \emph{pure product} formula arising from the linear factor \(1-t\).

Let
\[
Q(z,t)=1-t\in\Q[z][t],\qquad\text{so }Q(z,0)=1.
\]
Define, for each \(m\ge 1\),
\[
F_m(x_1,\dots,x_m)=\prod_{j=1}^m(1-x_j)\in\Q[z][x_1,\dots,x_m].
\]
Then \((F_m)_{m\ge 1}\) is a product formula with datum \(\mathbf Q=(Q)\) and exponent \(m_1=1\).
The associated multiplicative invariant is
\[
M_Q(n)=\prod_{k=1}^{n-1}\bigl(1-\alpha_{k,n}\bigr)
=\prod_{k=1}^{n-1}\left(1-\cos\!\Bigl(\frac{2\pi k}{n}\Bigr)\right).
\]
Let \(\omega=e^{2\pi i/n}\). Using \(2\cos(2\pi k/n)=\omega^k+\omega^{-k}\), we obtain
\[
1-\cos\!\Bigl(\frac{2\pi k}{n}\Bigr)
=
1-\frac{\omega^k+\omega^{-k}}{2}
=
\frac{(1-\omega^k)(1-\omega^{-k})}{2}.
\]
Therefore
\[
M_Q(n)
=
2^{-(n-1)}\prod_{k=1}^{n-1}(1-\omega^k)\prod_{k=1}^{n-1}(1-\omega^{-k}).
\]
Since \(k\mapsto n-k\) permutes \(\{1,\dots,n-1\}\) and \(\omega^{-k}=\omega^{\,n-k}\), we have
\[
\prod_{k=1}^{n-1}(1-\omega^{-k})=\prod_{k=1}^{n-1}(1-\omega^{k}),
\]
hence
\[
M_Q(n)=2^{-(n-1)}\Bigl(\prod_{k=1}^{n-1}(1-\omega^k)\Bigr)^{2}.
\]
Finally, from
\[
\frac{x^n-1}{x-1}=\prod_{k=1}^{n-1}(x-\omega^k)
\quad\Longrightarrow\quad
\prod_{k=1}^{n-1}(1-\omega^k)=n,
\]
we conclude that
\[
\boxed{
\prod_{k=1}^{n-1}\bigl(1-\cos(2\pi k/n)\bigr)=\frac{n^2}{2^{\,n-1}}
\qquad(n\ge 2).
}
\]
\end{example}

\begin{example}\label{ex:pure-product_quadratic}
Let
\[
Q(z,t)=1-t^2\in\Q[z][t],\qquad\text{so }Q(z,0)=1.
\]
Define the product family
\[
F_m(x_1,\dots,x_m)
=\prod_{j=1}^{m} Q(z,x_j)
=\prod_{j=1}^{m}\bigl(1-x_j^2\bigr)
\in\Q[z][x_1,\dots,x_m].
\]
This is a product formula with datum \(\mathbf Q=(Q)\) and exponent \(m_1=1\), with base numerator \(A_m\equiv 1\).
Truncation-compatibility holds since \(Q(z,0)=1\).

For \(n\ge 2\) set \(\alpha_{k,n}=\cos(2\pi k/n)\). Then the cosine-point evaluation is
\[
a(n)
=
F_{n-1}(\alpha_{1,n},\dots,\alpha_{n-1,n})\Big|_{z=n-1}
=
\prod_{k=1}^{n-1}\bigl(1-\alpha_{k,n}^2\bigr)
=
\prod_{k=1}^{n-1}\sin^2\!\Bigl(\frac{2\pi k}{n}\Bigr)
=
M_Q(n).
\]

We compute \(M_Q(n)\). Let \(\omega=e^{2\pi i/n}\). Using
\[
\sin\!\Bigl(\frac{2\pi k}{n}\Bigr)=\frac{\omega^k-\omega^{-k}}{2\ii},
\]
we obtain
\[
M_Q(n)
=
(2\ii)^{-2(n-1)}\prod_{k=1}^{n-1}(\omega^k-\omega^{-k})^2
=
4^{-(n-1)}\prod_{k=1}^{n-1}(\omega^k-\omega^{-k})^2.
\]
Now
\[
\omega^k-\omega^{-k}=\omega^{-k}(\omega^{2k}-1)=-\,\omega^{-k}(1-\omega^{2k}),
\]
so
\[
(\omega^k-\omega^{-k})^2=\omega^{-2k}(1-\omega^{2k})^2,
\]
and hence
\[
\prod_{k=1}^{n-1}(\omega^k-\omega^{-k})^2
=
\left(\prod_{k=1}^{n-1}\omega^{-2k}\right)\left(\prod_{k=1}^{n-1}(1-\omega^{2k})\right)^2.
\]
Since
\[
\prod_{k=1}^{n-1}\omega^{-2k}=\omega^{-2\cdot\frac{n(n-1)}2}=\omega^{-n(n-1)}=(\omega^n)^{-(n-1)}=1,
\]
we conclude that
\[
M_Q(n)=4^{-(n-1)}\left(\prod_{k=1}^{n-1}(1-\omega^{2k})\right)^{\!2}.
\]

If \(n\) is odd, then \(k\mapsto 2k\) permutes \(\{1,\dots,n-1\}\) modulo \(n\), hence
\[
\prod_{k=1}^{n-1}(1-\omega^{2k})=\prod_{k=1}^{n-1}(1-\omega^k)=n.
\]
If \(n\) is even, then \(k=n/2\) gives \(\omega^{2k}=\omega^n=1\), so one factor \(1-\omega^{2k}\) vanishes and
\(\prod_{k=1}^{n-1}(1-\omega^{2k})=0\).
Thus
\[
\boxed{
\prod_{k=1}^{n-1}\left(1-\cos^2\!\Bigl(\frac{2\pi k}{n}\Bigr)\right)
=
\prod_{k=1}^{n-1}\sin^2\!\Bigl(\frac{2\pi k}{n}\Bigr)
=
\begin{cases}
\dfrac{n^2}{4^{\,n-1}}, & \text{if \(n\) is odd},\\[8pt]
0, & \text{if \(n\) is even},
\end{cases}
\qquad(n\ge 2).
}
\]
\end{example}

\section{Coefficient extraction from fixed products}
\label{sec:operations-fixed-products}

In several applications one encounters expressions that are not themselves admissible polynomial formulas,
but arise from admissible product-type families by \emph{coefficient extraction} in an auxiliary parameter.
The next lemma records a formal mechanism that produces bounded-degree symmetric coefficient families from a
unit-normalized fixed product, so that the bounded-degree summability framework applies.

\begin{lemma}\label{lem:coeff-extraction-product}
Let
\[
Q(z,t)=\sum_{\ell\ge 0} q_\ell(z)\,t^\ell\ \in\ \Q[z][[t]]
\]
be a formal power series in \(t\) with coefficients in \(\Q[z]\), normalized by \(q_0(z)=Q(z,0)=1\).
Fix \(m\ge 1\). Consider the formal power series in an auxiliary variable \(s\)
\[
B_m(s;x_1,\dots,x_m)
=
\prod_{j=1}^m Q\bigl(z,\,s x_j\bigr)
\ \in\ \Q[z][x_1,\dots,x_m][[s]].
\]
Write
\[
B_m(s;x)=\sum_{r\ge 0} b_{m,r}(x)\,s^r,
\qquad b_{m,r}(x)\in \Q[z][x_1,\dots,x_m]^{S_m}.
\]
Then, for each fixed \(r\ge 0\), the family \(b_r=(b_{m,r})_{m\ge 1}\) defines a symmetric polynomial formula
in \(\mathcal F_{\mathrm{poly}}\). Moreover, for each fixed \(r\) one has the uniform degree bound
\[
\deg_x\bigl(b_{m,r}\bigr)\le r\qquad\text{for all }m\ge 1.
\]
In particular, for each fixed \(r\), the cosine-point evaluation
\[
a_{b_r}(n)=b_{n-1,r}\bigl(\alpha_{1,n},\dots,\alpha_{n-1,n}\bigr)\Big|_{z=n-1}
\]
is eventually polynomial in \(n\) (indeed for all \(n\ge r+2\)).
\end{lemma}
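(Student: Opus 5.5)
The plan is to expand the product explicitly in the auxiliary variable \(s\) and read off the coefficient of \(s^r\) as a finite sum of monomials. Since \(Q(z,sx_j)=\sum_{\ell\ge0}q_\ell(z)s^\ell x_j^\ell\), multiplying the \(m\) factors gives
\[
b_{m,r}(x)=\sum_{\substack{\ell_1+\cdots+\ell_m=r\\ \ell_j\ge 0}}\ \prod_{j=1}^m q_{\ell_j}(z)\,x_j^{\ell_j}.
\]
This is a finite sum, because there are finitely many compositions of \(r\) into \(m\) nonnegative parts. Hence \(b_{m,r}\in\Q[z][x_1,\dots,x_m]\), and every monomial has total \(x\)-degree exactly \(r\), which gives the bound \(\deg_x(b_{m,r})\le r\). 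The index set is invariant under permuting \((\ell_1,\dots,\ell_m)\), so \(b_{m,r}\) is \(S_m\)-symmetric.

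Next I will check truncation-compatibility. Setting \(x_{m+1}=0\) in \(B_{m+1}(s;x)\) replaces the last factor by \(Q(z,0)=1\), so \(B_{m+1}(s;x_1,\dots,x_m,0)=B_m(s;x_1,\dots,x_m)\) as power series in \(s\). Comparing coefficients of \(s^r\) then yields \(b_{m+1,r}(x_1,\dots,x_m,0)=b_{m,r}(x_1,\dots,x_m)\). The same conclusion can be read off the explicit formula: with \(x_{m+1}=0\), only the terms with \(\ell_{m+1}=0\) survive, and they carry the factor \(q_0=1\). To justify coefficient comparison, I note that substitution \(x_{m+1}\mapsto0\) is a ring homomorphism on coefficients and so extends coefficientwise to \([[s]]\). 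Together these show that \(b_r=(b_{m,r})_m\) lies in \(\mathcal F_{\mathrm{poly}}\) and is admissible of \(x\)-degree \(\le r\) in the sense of Definition~\ref{def:admissible}(a).

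Finally, I will apply Corollary~\ref{cor:eventual-polynomial} with \(d=r\). It gives \(R_\infty\in\Q[n]\) with \(a_{b_r}(n)=R_\infty(n)\) for all \(n\ge r+2\), which is the stated eventual polynomiality.

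The only delicate point is the bookkeeping with formal power series when \(Q\) is not a polynomial in \(t\). Specifically, one has to confirm that the infinite product is well defined in \(\Q[z][x][[s]]\): it is a finite product over \(j\), and each coefficient is a finite sum. One also has to confirm that extracting coefficients commutes with the specialization \(x_{m+1}=0\). Both facts follow directly from the explicit composition formula, so I expect no real obstacle.
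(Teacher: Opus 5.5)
Your proposal is correct and follows the paper's proof essentially step for step. The explicit composition formula for $b_{m,r}$ gives finiteness, polynomiality and the degree bound; $Q(z,0)=1$ gives truncation-compatibility by comparing $s^r$-coefficients; and Corollary~\ref{cor:eventual-polynomial} with $d=r$ gives polynomiality for $n\ge r+2$. The only cosmetic difference is that you derive symmetry from the permutation-invariance of the composition index set, whereas the paper derives it from the symmetry of $B_m$ itself.
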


\begin{proof}
\emph{Existence of coefficients and symmetry.}
For each \(j\),
\[
Q(z,sx_j)=\sum_{\ell\ge 0} q_\ell(z)\,s^\ell x_j^\ell \in \Q[z][x_j][[s]].
\]
Hence
\[
B_m(s;x)=\prod_{j=1}^m Q(z,sx_j)\in \Q[z][x_1,\dots,x_m][[s]]
\]
is well-defined, and writing
\(
B_m(s;x)=\sum_{r\ge 0} b_{m,r}(x)\,s^r
\)
defines unique coefficients \(b_{m,r}(x)\in \Q[z][x_1,\dots,x_m]\).
Since \(B_m\) is symmetric in \(x_1,\dots,x_m\), each coefficient \(b_{m,r}\) lies in
\(\Q[z][x_1,\dots,x_m]^{S_m}\).

\medskip

\emph{Polynomiality and degree bound.}
Expanding the product, the coefficient of \(s^r\) is
\[
b_{m,r}(x)=\sum_{\substack{\ell_1,\dots,\ell_m\ge 0\\ \ell_1+\cdots+\ell_m=r}}
\Bigl(\prod_{j=1}^m q_{\ell_j}(z)\Bigr)\,x_1^{\ell_1}\cdots x_m^{\ell_m}.
\]
The indexing set is finite, so \(b_{m,r}(x)\) is a finite \(\Q[z]\)-linear combination of monomials.
Each monomial has total \(x\)-degree \(\ell_1+\cdots+\ell_m=r\), hence \(\deg_x(b_{m,r})\le r\).

\medskip

\emph{Truncation-compatibility.}
Using \(Q(z,0)=1\), we have in \(\Q[z][x_1,\dots,x_m][[s]]\),
\[
B_{m+1}(s;x_1,\dots,x_m,0)
=
\prod_{j=1}^{m} Q(z,sx_j)\cdot Q(z,0)
=
B_m(s;x_1,\dots,x_m).
\]
Comparing coefficients of \(s^r\) gives
\(
b_{m+1,r}(x_1,\dots,x_m,0)=b_{m,r}(x_1,\dots,x_m)
\),
so \((b_{m,r})_{m\ge 1}\) defines an element of \(\mathcal F_{\mathrm{poly}}\).

\medskip

\emph{Eventual polynomiality.}
For fixed \(r\), the family \(b_r\) is an admissible symmetric polynomial formula of \(x\)-degree \(\le r\),
so Corollary~\ref{cor:eventual-polynomial} applies with \(d=r\), giving eventual polynomiality for \(n\ge r+2\).
\end{proof}

\subsection{Application: complete symmetric sums at cosine points}
\label{subsec:application-sec9-complete}

The goal of this subsection is to illustrate how the general summability mechanism applies to complete symmetric functions
evaluated at punctured cosine points.  The key observation is that the complete symmetric polynomial \(h_r\) is obtained by
coefficient extraction from the unit-normalized formal power series \(Q(t)=(1-t)^{-1}\in\Q[[t]]\) (independent of \(z\)).

\medskip

Fix an integer \(r\ge 1\).  For each \(m\ge 1\) define the \emph{complete symmetric polynomial}
\[
h_r(x_1,\dots,x_m)
=
\sum_{1\le i_1\le\cdots\le i_r\le m} x_{i_1}\cdots x_{i_r},
\]
and set
\[
F^{(h_r)}_m(x_1,\dots,x_m)=h_r(x_1,\dots,x_m).
\]
Then \(F^{(h_r)}=(F^{(h_r)}_m)_{m\ge 1}\) defines a symmetric polynomial formula in \(\mathcal F_{\mathrm{poly}}\), and it has
uniform \(x\)-degree \(\deg_x(F^{(h_r)}_m)=r\).  Indeed, by the classical generating identity
\cite[Chap.~7]{StanleyEC2},
\[
\prod_{j=1}^m \frac{1}{1-tx_j}
=
\sum_{q\ge 0} h_q(x_1,\dots,x_m)\,t^q,
\]
the polynomial \(h_r\) is obtained by coefficient extraction from \(Q(t)=(1-t)^{-1}\).  Truncation-compatibility and the
uniform degree bound therefore follow from Lemma~\ref{lem:coeff-extraction-product}.

\medskip

For \(n\ge 2\) set \(m=n-1\) and recall the punctured cosine points
\[
\alpha_{k,n}=\cos\!\Bigl(\frac{2\pi k}{n}\Bigr),
\qquad 1\le k\le n-1.
\]
We study the cosine evaluation
\[
a_r(n)=h_r(\alpha_{1,n},\dots,\alpha_{n-1,n}).
\]
By Corollary~\ref{cor:eventual-polynomial}, there is a stable range \(n\ge r+2\) in which \(a_r(n)\) agrees with a polynomial
\(R_\infty^{(h_r)}(n)\in\Q[n]\).  Our goal is to compute this stable-range polynomial explicitly.

\bigskip

\subsubsection{The Catalan generating series}

We begin by introducing the auxiliary series
\begin{equation}\label{eq:def-A}
A(t)=\frac{2}{1+\sqrt{1-t^2}}\in\Q[[t]].
\end{equation}
Rationalizing the denominator gives
\[
A(t)=\frac{2(1-\sqrt{1-t^2})}{t^2}.
\]
Setting \(u=t^2/4\), this becomes the \emph{Catalan generating function \(C(u)\):}
\[
A(t)=\frac{1-\sqrt{1-4u}}{2u}=C(u).
\]
Hence
\[
A(t)=\sum_{\ell\ge 0}\Catnum{\ell}\left(\frac{t^2}{4}\right)^{\!\ell}
=\sum_{\ell\ge 0} c_\ell\,t^{2\ell},
\qquad
c_\ell=\frac{\Catnum{\ell}}{4^\ell},
\qquad
\Catnum{\ell}=\frac{1}{\ell+1}\binom{2\ell}{\ell}.
\]

\begin{lemma}\label{lem:A-identity}
The series \(A(t)=\dfrac{2}{1+\sqrt{1-t^2}}\in\Q[[t]]\) satisfies
\[
A(t)=1+\frac{t^2}{4}\,A(t)^2.
\]
\end{lemma}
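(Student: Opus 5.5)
Since \(A(t)\) is defined by the closed form \(2/(1+\sqrt{1-t^2})\), the plan is to verify the quadratic identity directly at the level of formal power series. Throughout, \(\sqrt{1-t^2}\) denotes the unique series in \(\Q[[t]]\) with constant term \(1\) whose square is \(1-t^2\). Write \(S=\sqrt{1-t^2}\). Then \(1+S\) has constant term \(2\), so it is invertible in \(\Q[[t]]\), and \(A=2(1+S)^{-1}\) is well defined.

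The key step is to clear denominators. Multiply the claimed identity \(A=1+\tfrac{t^2}{4}A^2\) by \((1+S)^2\), which is a unit. The identity then becomes equivalent to
\[
2(1+S)=(1+S)^2+t^2 .
\]
Expanding the right-hand side gives \(1+2S+S^2+t^2\). Using \(S^2=1-t^2\), this equals \(1+2S+1-t^2+t^2=2+2S\), which is exactly the left-hand side. Because \((1+S)^2\) is invertible, we may divide back and recover the stated identity.

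A cross-check is available through the rationalized form \(A=2(1-S)/t^2\), or equivalently through the Catalan form \(A=C(u)\) with \(u=t^2/4\). The classical relation \(C(u)=1+uC(u)^2\) is precisely the claimed identity after substituting \(u=t^2/4\). Either route suffices.

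The only point requiring care is a formal one: the rationalized expression involves division by \(t^2\), which is not a unit in \(\Q[[t]]\). For that reason I would carry out the main computation with the unit \(1+S\) rather than with the rationalized form, so that no step divides by a non-unit. Beyond this, the verification is routine algebra.
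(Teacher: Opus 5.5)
Your proof is correct, and it argues somewhat differently from the paper. The paper reduces the lemma to the Catalan case. It sets $u=t^2/4$, notes that $A(t)=C(u)$ with $C(u)=\frac{1-\sqrt{1-4u}}{2u}$ the Catalan generating function, and cites the classical functional equation $C(u)=1+uC(u)^2$. You instead verify the identity directly: you multiply by the unit $(1+S)^2$ and use $S^2=1-t^2$, keeping the Catalan route only as a cross-check. Your argument is self-contained and, as you point out, never divides by the non-unit $t^2$. The paper's identification $A(t)=C(t^2/4)$ passes through the rationalized form $2(1-S)/t^2$, which is legitimate only because $1-S\in t^2\,\Q[[t]]$ and $(1+S)(1-S)=t^2$, and it also rests on an external citation. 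The paper's framing does buy an immediate link to Catalan numbers, which it uses right afterwards in the expansion $A(t)=\sum_{\ell}\Catnum{\ell}(t^2/4)^{\ell}$. For the lemma itself, though, your short computation is the more economical and more transparently rigorous proof.
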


\begin{proof}
This is the Catalan functional equation in disguise (see \cite[Chap.~7]{StanleyEC2}): if one sets \(u=t^2/4\) and writes
\(C(u)=\dfrac{1-\sqrt{1-4u}}{2u}\), then \(C(u)=1+uC(u)^2\) and \(A(t)=C(t^2/4)\).
\end{proof}

\begin{lemma}\label{lem:logA-series}
Let \(A(t)\) be as in \eqref{eq:def-A}. Then in \(\Q[[t]]\) one has
\[
\log A(t)=\sum_{j\ge 1}\frac{1}{2j}\binom{2j}{j}\left(\frac{t^2}{4}\right)^{\!j}.
\]
\end{lemma}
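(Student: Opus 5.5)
We prove the identity by differentiating both sides in \(t\) and comparing. Both sides are formal power series in \(\Q[[t]]\) with constant term \(0\): on the left because \(A(0)=1\), so \(\log A(t)\) is the well-defined series \(\log(1+(A-1))\); on the right by inspection. Hence it suffices to show that their derivatives agree, i.e.
\[
\frac{A'(t)}{A(t)}=\sum_{j\ge 1}\frac{1}{2j}\binom{2j}{j}\,\frac{2j\,t^{2j-1}}{4^j}=\frac1t\sum_{j\ge1}\binom{2j}{j}\Bigl(\frac{t^2}{4}\Bigr)^{j}.
\]

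Next we use the classical central binomial series \(\sum_{j\ge0}\binom{2j}{j}u^j=(1-4u)^{-1/2}\). With \(u=t^2/4\) the right-hand side above becomes
\[
\frac1t\Bigl(\frac{1}{\sqrt{1-t^2}}-1\Bigr).
\]

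For the left-hand side we work with the closed form \(A(t)=2/(1+\sqrt{1-t^2})\), all manipulations taking place in \(\Q[[t]]\) with \(\sqrt{1-t^2}\) the binomial series. Write \(w=\sqrt{1-t^2}\), so that \(w'=-t/w\). Then
\[
\log A=\log 2-\log(1+w),\qquad \frac{A'}{A}=-\frac{w'}{1+w}=\frac{t}{w(1+w)}.
\]
It remains to check that \(\frac{t}{w(1+w)}=\frac{1-w}{tw}\). Cross-multiplying, this reduces to \(t^2=(1-w)(1+w)=1-w^2\), which is exactly the defining relation of \(w\). (One could alternatively derive \(A'/A\) from the functional equation of Lemma~\ref{lem:A-identity}, but the explicit form is quicker.)

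The only delicate point is formal rather than computational. The expression \(\log 2-\log(1+w)\) is not a priori a power series in \(t\) under the formal calculus. To avoid this, we never differentiate it directly: we take \(A'/A\) as the derivative of \(\log A\) in \(\Q[[t]]\), which is legitimate since \(A(0)=1\), and compute \(A'\) by the quotient rule from \(A=2(1+w)^{-1}\), where \(1+w\) is invertible because its constant term is \(2\). This gives \(A'/A=-w'/(1+w)\) exactly as above. Since the two derivatives agree and both series vanish at \(t=0\), the identity follows.
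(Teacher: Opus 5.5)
Your proof is correct and follows essentially the same route as the paper. Both arguments differentiate $\log A$, use the algebraic relation satisfied by the square root to simplify the derivative to $\frac{1}{t}\bigl((1-t^2)^{-1/2}-1\bigr)$, and match it against the central binomial series. The paper does this in the variable $u=t^2/4$ and integrates term by term, while you work in $t$ directly, and your computation of $A'/A$ by the quotient rule avoids the paper's unexplained formal step $B(u)=\log 2-\log(1+s(u))$.
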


\begin{proof}
Set \(u=t^2/4\), so \(1-t^2=1-4u\), and write
\[
A(t)=\frac{2}{1+\sqrt{1-4u}}.
\]
Define \(B(u)=\log\!\left(\frac{2}{1+\sqrt{1-4u}}\right)\in\Q[[u]]\), so that \(\log A(t)=B(t^2/4)\).  Let
\(s(u)=\sqrt{1-4u}\), so \(s'(u)=-2/s(u)\). Then
\[
B(u)=\log 2-\log(1+s(u)),
\qquad
B'(u)=-\frac{s'(u)}{1+s(u)}=\frac{2}{s(u)(1+s(u))}.
\]
Using \(1-s(u)^2=4u\), one checks
\[
\frac{2}{s(1+s)}=\frac{1}{2u}\left(\frac{1}{s}-1\right),
\]
hence
\[
B'(u)=\frac{1}{2u}\left(\frac{1}{\sqrt{1-4u}}-1\right).
\]
The binomial series gives
\[
\frac{1}{\sqrt{1-4u}}=\sum_{j\ge 0}\binom{2j}{j}u^j,
\]
so
\[
B'(u)=\frac{1}{2u}\sum_{j\ge 1}\binom{2j}{j}u^j
=\frac12\sum_{j\ge 1}\binom{2j}{j}u^{j-1}.
\]
Integrating term-by-term in \(\Q[[u]]\) and using \(B(0)=0\) yields
\[
B(u)=\sum_{j\ge 1}\frac{1}{2j}\binom{2j}{j}u^j.
\]
Substituting \(u=t^2/4\) gives the desired identity.
\end{proof}

For each integer \(n\) write
\begin{equation}\label{eq:def-aell}
A(t)^n=\sum_{\ell\ge 0} a_\ell(n)\,t^{2\ell}.
\end{equation}

\begin{lemma}\label{lem:am-recurrence}
For all \(\ell\ge 1\) and all integers \(n\) one has
\[
a_\ell(n)=a_\ell(n-1)+\frac14\,a_{\ell-1}(n+1),
\qquad
a_0(n)=1.
\]
\end{lemma}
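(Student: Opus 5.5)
The plan is to derive the recurrence from the functional equation in Lemma~\ref{lem:A-identity}. Write it as \(A = 1 + \tfrac{t^2}{4}A^2\) in \(\Q[[t]]\). Since \(A(0)=1\), the series \(A\) is a unit in \(\Q[[t]]\), so \(A^n\) makes sense for every integer \(n\), including negative ones. Multiplying the functional equation by \(A^{n-1}\) gives the identity
\[
A(t)^n = A(t)^{n-1} + \frac{t^2}{4}\,A(t)^{n+1}
\qquad\text{in }\Q[[t]],
\]
valid for all \(n\in\Z\).

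\textbf{Step 1: \(A\) is even.} By definition \eqref{eq:def-A}, or equivalently \(A(t)=C(t^2/4)\), \(A\) is a power series in \(t^2\). Hence every power \(A^n\) is a series in \(t^2\) alone, and the expansion \eqref{eq:def-aell} is legitimate, with no odd powers of \(t\) appearing.

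\textbf{Step 2: compare coefficients of \(t^{2\ell}\).} For \(\ell\ge 1\):
\begin{itemize}[label={}, leftmargin=0pt]
\item the left side contributes \(a_\ell(n)\);
\item the term \(A^{n-1}\) contributes \(a_\ell(n-1)\);
\item the term \(\tfrac{t^2}{4}A^{n+1}\) contributes \(\tfrac14 a_{\ell-1}(n+1)\), because multiplication by \(t^2\) shifts the index by one.
\end{itemize}
Together these give exactly the stated recurrence.

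\textbf{Step 3: the initial value.} Since \(A(0)=1\), the constant term of \(A^n\) is \(1\), so \(a_0(n)=1\) for every \(n\).

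\textbf{Expected obstacle.} The only delicate point is justifying negative exponents. This is handled by the unit observation above: \(A^{-1}\) exists in \(\Q[[t]]\), and multiplying both sides of the functional equation by the unit \(A^{n-1}\) is valid in that ring. Nothing else should present difficulty.
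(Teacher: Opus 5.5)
Your proof is correct and follows the paper's argument: multiply the functional equation of Lemma~\ref{lem:A-identity} by \(A(t)^{n-1}\), then compare coefficients of \(t^{2\ell}\). Your additional remarks are valid details that the paper leaves implicit: \(A\) is a unit in \(\Q[[t]]\), so negative powers make sense, and \(A\) is a series in \(t^2\), so the expansion \eqref{eq:def-aell} is legitimate.
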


\begin{proof}
Multiply the identity of Lemma~\ref{lem:A-identity} by \(A(t)^{n-1}\) to obtain
\[
A(t)^n=A(t)^{n-1}+\frac{t^2}{4}A(t)^{n+1}.
\]
Comparing coefficients of \(t^{2\ell}\) yields the recurrence.
\end{proof}

\begin{lemma}\label{lem:am-closed}
For every \(\ell\ge 0\) and every integer \(n\),
\[
a_\ell(n)
=
\frac{1}{4^\ell}\cdot \frac{n}{n+2\ell}\binom{n+2\ell}{\ell}.
\]
Equivalently, for \(\ell\ge 1\),
\[
a_\ell(n)=\frac{n}{2^{2\ell}\,\ell!}\prod_{j=\ell+1}^{2\ell-1}(n+j),
\qquad a_0(n)=1.
\]
\end{lemma}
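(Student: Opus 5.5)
The plan is to establish the closed form by induction on \(\ell\), using the recurrence of Lemma~\ref{lem:am-recurrence} together with the base case \(a_0(n)=1\). An alternative route would be Lagrange inversion applied to the Catalan series \(C(u)\), since \(A(t)^n=C(u)^n\) with \(u=t^2/4\). The classical formula \([u^\ell]C(u)^n=\frac{n}{n+2\ell}\binom{n+2\ell}{\ell}\) then gives the claim immediately after dividing by \(4^\ell\). I will use the induction route, because it relies only on results already proved in the paper.

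\emph{Setting up the target.} Define
\[
b_\ell(n)=\frac{n}{n+2\ell}\binom{n+2\ell}{\ell}\quad(\ell\ge 1),\qquad b_0(n)=1.
\]
For \(\ell\ge1\), this equals \(\frac{n}{\ell!}\prod_{j=\ell+1}^{2\ell-1}(n+j)\). This second expression is the polynomial form in the statement, so the equivalence of the two displayed formulas is just algebraic rewriting: cancel the factor \(n+2\ell\) against the top factor of \(\binom{n+2\ell}{\ell}\). The polynomial form has the advantage of being valid for \emph{every} integer \(n\), including \(n\le 0\) and \(n=-2\ell\), where the fraction \(\frac{n}{n+2\ell}\) is ill-defined. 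So I will prove the identity \(a_\ell(n)=4^{-\ell}b_\ell(n)\) with \(b_\ell\) taken in this polynomial form.

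\emph{Step 1: \(a_\ell(n)\) is a polynomial in \(n\).} Since \(A(t)=\exp(\log A(t))\) and \(\log A(t)\) has zero constant term (Lemma~\ref{lem:logA-series}), we have \(A(t)^n=\exp(n\log A(t))\) for all integers \(n\). Each coefficient \(a_\ell(n)\) is therefore a polynomial in \(n\) of degree \(\le\ell\), and it suffices to verify the identity for infinitely many \(n\), or as a polynomial identity. In fact, the induction below already works for all integers \(n\) simultaneously, so this step mainly guarantees consistency for negative \(n\).

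\emph{Step 2: induction.} Set \(\tilde a_\ell(n)=4^\ell a_\ell(n)\). The recurrence of Lemma~\ref{lem:am-recurrence} becomes
\[
\tilde a_\ell(n)-\tilde a_\ell(n-1)=\tilde a_{\ell-1}(n+1).
\]
Assume \(\tilde a_{\ell-1}=b_{\ell-1}\). Then \(\tilde a_\ell\) and \(b_\ell\) have the same forward difference in \(n\), provided we check the identity
\[
b_\ell(n)-b_\ell(n-1)=b_{\ell-1}(n+1).
\]
Given this identity, \(\tilde a_\ell-b_\ell\) is constant in \(n\). Evaluating at \(n=0\) fixes the constant: \(A(t)^0=1\) gives \(a_\ell(0)=0\) for \(\ell\ge1\), and \(b_\ell(0)=0\) because of the leading factor \(n\). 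Hence \(\tilde a_\ell=b_\ell\) for all integers \(n\).

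\emph{Main obstacle.} The real work is the binomial identity
\[
b_\ell(n)-b_\ell(n-1)=b_{\ell-1}(n+1).
\]
I would write \(b_\ell(n)=\binom{n+2\ell-1}{\ell}-\binom{n+2\ell-1}{\ell-1}\), which is the ballot-number form, valid as polynomials. With this form the difference telescopes via Pascal's rule:
\[
\binom{n+2\ell-1}{\ell}-\binom{n+2\ell-2}{\ell}=\binom{n+2\ell-2}{\ell-1},
\]
and similarly for the \(\ell-1\) terms. Together these give
\[
\binom{n+2\ell-2}{\ell-1}-\binom{n+2\ell-2}{\ell-2},
\]
which is exactly \(b_{\ell-1}(n+1)\). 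The edge case \(\ell=1\), where \(b_0=1\), must be checked directly: \(b_1(n)=n\), so \(b_1(n)-b_1(n-1)=1=b_0(n+1)\).
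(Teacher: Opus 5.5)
Your proposal is correct and takes the same route as the paper. Both show that the closed form satisfies the recurrence of Lemma~\ref{lem:am-recurrence} with \(a_0=1\) and conclude equality. The only difference in technique is the verification of the difference identity: you use the ballot form \(\binom{n+2\ell-1}{\ell}-\binom{n+2\ell-1}{\ell-1}\) and Pascal's rule, while the paper factors out \(S_\ell(n)\) and reduces to \(n(n+2\ell-1)=(n-1)(n+\ell)+\ell(n+1)\).

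Keep your normalization at \(n=0\) (using \(a_\ell(0)=0\) for \(\ell\ge1\)) and your separate check of \(\ell=1\). The recurrence in \(n\) alone determines \(a_\ell\) only up to an additive constant, and the paper's factorization \(P_\ell(n)=(n+2\ell-1)S_\ell(n)\) does not literally hold at \(\ell=1\). The paper's proof leaves both of these points implicit.
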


\begin{proof}
Define
\[
b_\ell(n)=\frac{1}{4^\ell}\cdot \frac{n}{n+2\ell}\binom{n+2\ell}{\ell},
\qquad (\ell\ge 0).
\]
Then \(b_0(n)=1\).  For \(\ell\ge 1\), the stated product form is obtained by expanding the binomial coefficient and
cancelling the factor \(n+2\ell\).

\medskip

\noindent\textbf{Claim.} For every \(\ell\ge 1\) and every integer \(n\),
\[
b_\ell(n)=b_\ell(n-1)+\frac14\,b_{\ell-1}(n+1).
\]
Granting the claim, Lemma~\ref{lem:am-recurrence} and the identity \(a_0(n)=b_0(n)=1\) imply \(a_\ell(n)=b_\ell(n)\)
for all \(\ell\ge 0\), since the coefficients in \eqref{eq:def-aell} are uniquely determined by comparing the coefficient
of \(t^{2\ell}\) in the identity
\(
A(t)^n=A(t)^{n-1}+\frac{t^2}{4}A(t)^{n+1}.
\)

\medskip

\noindent\textbf{Verification of the claim.}
Fix \(\ell\ge 1\).  Write
\[
b_\ell(n)=\frac{n}{2^{2\ell}\,\ell!}\,P_\ell(n),
\qquad
P_\ell(n)=\prod_{j=\ell+1}^{2\ell-1}(n+j),
\]
and set \(S_\ell(n)=\prod_{j=\ell+1}^{2\ell-2}(n+j)\) (so \(S_\ell(n)=1\) if \(\ell=1\)).
Then
\[
P_\ell(n)=(n+2\ell-1)S_\ell(n),\quad
P_\ell(n-1)=(n+\ell)S_\ell(n),\quad
P_{\ell-1}(n+1)=S_\ell(n).
\]
Substituting into the recurrence reduces it (after cancelling the common factor
\(S_\ell(n)/(2^{2\ell}\ell!)\)) to
\[
n(n+2\ell-1)=(n-1)(n+\ell)+\ell(n+1),
\]
which holds by direct expansion.
\end{proof}

\begin{remark}\label{rem:am-hypergeometric}
The coefficients \(a_\ell(n)\) admit the hypergeometric representation
\[
A(t)^n
=
{}_2F_{1}\!\left(\frac n2,\frac{n+1}{2};\,n+1;\,t^2\right),
\qquad
a_\ell(n)=
\frac{\left(\frac n2\right)_\ell\left(\frac{n+1}{2}\right)_\ell}{(n+1)_\ell\,\ell!},
\]
where \((x)_\ell\) denotes the Pochhammer symbol; see \cite[\S15.2]{DLMF}.
\end{remark}

\subsubsection*{Generating function and stable truncation}

\begin{lemma}\label{lem:hr-genfn}
For every integer \(n\ge 2\) one has the formal identity
\[
\sum_{r\ge 0} h_r(\alpha_{1,n},\dots,\alpha_{n-1,n})\,t^r
=
\prod_{k=1}^{n-1}\frac{1}{1-\alpha_{k,n}t}.
\]
\end{lemma}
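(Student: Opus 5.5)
This is the classical generating identity for complete symmetric polynomials, specialized at the punctured cosine points. The plan is to prove it at the level of formal power series in \(\Q[x_1,\dots,x_m][[t]]\) and then specialize.

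First I work with \(m=n-1\) formal variables. For each \(j\), the factor \(1-tx_j\) has constant term \(1\), so it is a unit in \(\Q[x_1,\dots,x_m][[t]]\), with inverse
\[
\frac{1}{1-tx_j}=\sum_{\ell\ge 0}x_j^{\ell}t^{\ell}.
\]
Multiplying these \(m\) geometric series, the coefficient of \(t^r\) is
\[
\sum_{\ell_1+\cdots+\ell_m=r}x_1^{\ell_1}\cdots x_m^{\ell_m},
\]
which is a finite sum. This is exactly \(h_r(x_1,\dots,x_m)\): a weakly increasing index sequence \(i_1\le\cdots\le i_r\) corresponds bijectively to its multiplicity vector \((\ell_1,\dots,\ell_m)\). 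Equivalently, this is the case \(Q(t)=(1-t)^{-1}\) of Lemma~\ref{lem:coeff-extraction-product}, with \(s\) playing the role of \(t\). The identity cited from \cite[Chap.~7]{StanleyEC2} may also be invoked directly.

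Next I specialize via the ring homomorphism \(\Q[x_1,\dots,x_m]\to\C\) sending \(x_k\mapsto\alpha_{k,n}\), extended coefficientwise to \(\Q[x][[t]]\to\C[[t]]\). This coefficientwise extension is a ring homomorphism because each coefficient of a product of power series is a finite polynomial expression in the coefficients of the factors. Ring homomorphisms send units to units and inverses to inverses, so the image of \((1-tx_k)^{-1}\) is \((1-\alpha_{k,n}t)^{-1}\) in \(\C[[t]]\). Applying the map to the identity from the first step gives the statement, with the \(z\)-specialization vacuous since nothing depends on \(z\).

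There is no real obstacle here. The only point needing care is to read the right-hand side as a formal product of inverses in \(\C[[t]]\), not as an analytic function. Analytically the identity also holds for \(|t|<1\), since \(|\alpha_{k,n}|\le1\) and this implies each series \(\sum_{\ell\ge0}\alpha_{k,n}^{\ell}t^{\ell}\) converges for \(|t|<1\), but the formal statement does not require this.
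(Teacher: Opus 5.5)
Your proof is correct and takes the same route as the paper, which cites the classical identity $\sum_{r} h_r(x)\,t^r=\prod_k (1-x_kt)^{-1}$ for indeterminates and then specializes $x_k=\alpha_{k,n}$. You additionally prove that identity via the multiplicity-vector bijection, and you justify that coefficientwise specialization $\Q[x_1,\dots,x_m][[t]]\to\C[[t]]$ is a ring homomorphism preserving inverses, which the paper leaves implicit.
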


\begin{proof}
For indeterminates \(x_1,\dots,x_{n-1}\) one has the classical identity
\[
\sum_{r\ge 0} h_r(x_1,\dots,x_{n-1})\,t^r
=
\prod_{k=1}^{n-1}\frac{1}{1-x_k t},
\]
see for instance \cite[Chap.~7]{StanleyEC2}.  Specialize \(x_k=\alpha_{k,n}\).
\end{proof}

\begin{lemma}\label{lem:trunk}
Fix an integer \(R\ge 1\) and assume \(n>R\).  Let
\[
H_n(t)=\sum_{r\ge 0} h_r(\alpha_{1,n},\dots,\alpha_{n-1,n})\,t^r
=\prod_{k=1}^{n-1}\frac{1}{1-\alpha_{k,n}t}\ \in\ \C[[t]].
\]
Then modulo \(t^{R+1}\) one has the congruence
\[
H_n(t)
\equiv
(1-t)\,A(t)^{\,n}
\pmod{t^{R+1}},
\]
where \(A(t)=\dfrac{2}{1+\sqrt{1-t^2}}\in\Q[[t]]\).
Equivalently, the image of \(H_n(t)\) in \(\C[[t]]/(t^{R+1})\) coincides with the image of
\((1-t)A(t)^n\in\Q[[t]]\).
\end{lemma}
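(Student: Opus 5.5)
Our plan is to compare logarithms: both sides are power series with constant term \(1\), so it suffices to show \(\log H_n(t)\equiv \log(1-t)+n\log A(t)\pmod{t^{R+1}}\), since \(\exp\) and \(\log\) are mutually inverse bijections between \(1+t\C[[t]]\) and \(t\C[[t]]\) and both respect reduction modulo \(t^{R+1}\).

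First we compute the left side. Since \(\log\frac{1}{1-xt}=\sum_{h\ge1}x^h t^h/h\), we have
\[
\log H_n(t)=\sum_{h\ge1}\frac{t^h}{h}\sum_{k=1}^{n-1}\alpha_{k,n}^h=\sum_{h\ge1}\frac{2^{-h}P_h(n)}{h}\,t^h .
\]
Only \(h\le R<n\) matters modulo \(t^{R+1}\), so Lemma~\ref{lem:Ph-stable} applies to every relevant \(h\). For odd \(h\) it gives \(2^{-h}P_h(n)=-1\). For even \(h=2j\) it gives \(2^{-h}P_h(n)=n\,4^{-j}\binom{2j}{j}-1\). Hence, modulo \(t^{R+1}\),
\[
\log H_n(t)\equiv -\sum_{h\ge1}\frac{t^h}{h}+n\sum_{j\ge1}\frac{1}{2j}\binom{2j}{j}\Bigl(\frac{t^2}{4}\Bigr)^{j}.
\]

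Next we identify the two pieces. The first sum is \(\log(1-t)\). The second, by Lemma~\ref{lem:logA-series}, is \(n\log A(t)\). Exponentiating then yields \(H_n(t)\equiv(1-t)A(t)^n\pmod{t^{R+1}}\), with \(A(t)^n=\exp(n\log A(t))\) in \(\Q[[t]]\).

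We expect the main point needing care to be the bookkeeping rather than any hard step. The constant \(-1\) in both parity cases of Lemma~\ref{lem:Ph-stable} must recombine exactly into \(\log(1-t)\); this works because \(P_h(n)\) omits \(k=0\), so \(2^{-h}P_h(n)=C(n,h)-1\). We must also check that exp/log compatibility with truncation is legitimate in \(\C[[t]]\); this holds because the coefficient of \(t^r\) in \(\exp(f)\) depends only on the coefficients of \(f\) up to \(t^r\).
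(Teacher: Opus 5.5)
Your proposal is correct and matches the paper's proof essentially step for step. You take logarithms, use the stable-range values of the punctured power sums to split \(\log H_n(t)\) modulo \(t^{R+1}\) into \(\log(1-t)\) plus \(n\log A(t)\) via Lemma~\ref{lem:logA-series}, and exponentiate, with the only cosmetic difference being that the paper quotes Proposition~\ref{prop:cos-powers} directly (writing \(C_j^{*}(n)=C(n,j)-1\)) where you invoke Lemma~\ref{lem:Ph-stable}, which packages the same computation.
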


\begin{proof}
By Lemma~\ref{lem:hr-genfn} we have in \(\C[[t]]\) the identity
\[
H_n(t)
=
\prod_{k=1}^{n-1}\frac{1}{1-\alpha_{k,n}t}.
\]
Since \(H_n(0)=1\), we may take logarithms in \(\C[[t]]\) and obtain
\[
\log H_n(t)
=
-\sum_{k=1}^{n-1}\log(1-\alpha_{k,n}t)
=
\sum_{j\ge 1}\frac{t^j}{j}\sum_{k=1}^{n-1}\alpha_{k,n}^{\,j}.
\]
Reducing modulo \(t^{R+1}\), only the terms \(1\le j\le R\) contribute:
\[
\log H_n(t)\equiv
\sum_{j=1}^{R}\frac{t^j}{j}\,C_j^{*}(n)
\pmod{t^{R+1}},
\qquad
C_j^{*}(n)=\sum_{k=1}^{n-1}\alpha_{k,n}^{\,j}.
\]

Assume \(n>R\), so \(n>j\) for every \(1\le j\le R\).
Write
\[
C(n,j)=\sum_{k=0}^{n-1}\cos^j\!\Bigl(\frac{2\pi k}{n}\Bigr),
\qquad
C_j^{*}(n)=\sum_{k=1}^{n-1}\cos^j\!\Bigl(\frac{2\pi k}{n}\Bigr)=C(n,j)-1.
\]
By Proposition~\ref{prop:cos-powers}, when \(n>j\) the sum in \eqref{eq:Cmh-binomial}
has only the term \(r=0\), hence
\[
C(n,j)=2^{-j}\,n\,\binompar{j}{j/2}.
\]
Therefore:
\begin{itemize}
\item if \(j\) is odd then \(\binompar{j}{j/2}=0\), so \(C(n,j)=0\) and hence \(C_j^{*}(n)=-1\);
\item if \(j=2m\) is even then \(\binompar{2m}{m}=\binom{2m}{m}\), so
\[
C_{2m}^{*}(n)=2^{-2m}n\binom{2m}{m}-1.
\]
\end{itemize}

Substituting into \(\log H_n(t)\) gives, modulo \(t^{R+1}\),
\[
\log H_n(t)\equiv
-\sum_{j=1}^{R}\frac{t^j}{j}
\;+\;
n\sum_{1\le m\le \lfloor R/2\rfloor}\frac{t^{2m}}{2m}\frac{1}{4^m}\binom{2m}{m}
\pmod{t^{R+1}}.
\]
The first term satisfies
\[
-\sum_{j=1}^{R}\frac{t^j}{j}\equiv \log(1-t)\pmod{t^{R+1}}
\]
in \(\Q[[t]]\).
For the even part, Lemma~\ref{lem:logA-series} implies
\[
\log A(t)=\sum_{m\ge 1}\frac{1}{2m}\binom{2m}{m}\left(\frac{t^2}{4}\right)^{m},
\]
so truncating modulo \(t^{R+1}\) yields
\[
\sum_{1\le m\le \lfloor R/2\rfloor}\frac{t^{2m}}{2m}\frac{1}{4^m}\binom{2m}{m}
\equiv \log A(t)\pmod{t^{R+1}}.
\]
Hence
\[
\log H_n(t)\equiv \log(1-t)+n\log A(t)\pmod{t^{R+1}}.
\]

Both sides lie in \(t\,\C[[t]]\), and the exponential map
\(\exp:t\,\C[[t]]\to 1+t\,\C[[t]]\) preserves congruences modulo \(t^{R+1}\).
Exponentiating yields
\[
H_n(t)\equiv (1-t)\,A(t)^{\,n}\pmod{t^{R+1}},
\]
as claimed.
\end{proof}

\subsubsection*{Stable-range evaluation of \(h_r(\alpha_{1,n},\dots,\alpha_{n-1,n})\)}

We now compute the stable-range polynomial \(R_\infty^{(h_r)}(n)\) by extracting the coefficient of \(t^r\) from \((1-t)A(t)^n\).

\begin{theorem}\label{thm:hr-pattern}
Let \(r\ge 2\) and set \(N_\ast=r+2\).
Write \(r=2m\) or \(r=2m+1\) with \(m\ge 1\).
Then for all integers \(n\ge N_\ast\),
\[
h_r(\alpha_{1,n},\dots,\alpha_{n-1,n})
=
\begin{cases}
\displaystyle \frac{n}{2^{2m}m!}\prod_{j=m+1}^{2m-1}(n+j), & \text{if \(r=2m\) is even},\\[10pt]
\displaystyle -\frac{n}{2^{2m}m!}\prod_{j=m+1}^{2m-1}(n+j), & \text{if \(r=2m+1\) is odd}.
\end{cases}
\]
Equivalently, these agree with polynomials in \(n\) for all \(n\ge r+2\).
\end{theorem}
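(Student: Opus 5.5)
The plan is to read the result off the truncated generating function. Fix \(r\ge 2\) and \(n\ge r+2\); in particular \(n>r\). Lemma~\ref{lem:trunk} with \(R=r\) gives
\[
H_n(t)\equiv (1-t)A(t)^n \pmod{t^{r+1}}.
\]
So \(h_r(\alpha_{1,n},\dots,\alpha_{n-1,n})\) equals the coefficient of \(t^r\) in \((1-t)A(t)^n\).

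By \eqref{eq:def-aell}, \(A(t)^n=\sum_{\ell\ge0}a_\ell(n)t^{2\ell}\) contains only even powers of \(t\). Hence
\[
(1-t)A(t)^n=\sum_{\ell\ge 0}a_\ell(n)\,t^{2\ell}-\sum_{\ell\ge0}a_\ell(n)\,t^{2\ell+1}.
\]
The coefficient of \(t^{r}\) is therefore \(a_m(n)\) when \(r=2m\), and \(-a_m(n)\) when \(r=2m+1\). Since \(r\ge2\), we have \(m\ge1\) in both cases.

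Next, substitute the closed form from Lemma~\ref{lem:am-closed}, valid for \(\ell=m\ge1\):
\[
a_m(n)=\frac{n}{2^{2m}m!}\prod_{j=m+1}^{2m-1}(n+j).
\]
This gives exactly the two displayed cases of the theorem. Both right-hand sides are visibly polynomials in \(n\) with rational coefficients, which proves the ``equivalently'' clause. This is consistent with Corollary~\ref{cor:eventual-polynomial} applied with \(d=r\).

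There is essentially no obstacle here; all the analytic content sits in Lemma~\ref{lem:trunk}, which needed \(n>R\) to invoke the single-term case of Proposition~\ref{prop:cos-powers}. The only points needing care are bookkeeping:
\begin{itemize}
\item The threshold \(n\ge r+2\) implies the hypothesis \(n>r\) of Lemma~\ref{lem:trunk}.
\item The parity split correctly assigns the \(-t\) factor to the odd case.
\item For \(m=1\) the product \(\prod_{j=2}^{1}\) is empty and equals \(1\). This gives \(h_2=n/4\) and \(h_3=-n/4\), which can be checked against \(p_1=-1\) and \(p_2=(n-2)/2\) via Newton's identities as a sanity check.
\end{itemize}
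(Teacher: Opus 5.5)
Your proposal is correct and matches the paper's proof step for step: take $R=r$ in Lemma~\ref{lem:trunk}, use that $A(t)^n$ has only even powers to read off the coefficient of $t^r$ in $(1-t)A(t)^n$ as $\pm a_m(n)$, and substitute the closed form from Lemma~\ref{lem:am-closed}. One small remark on your sanity check, which is not part of the proof: checking $h_3=-n/4$ via Newton's identities also requires $p_3=-1$ (valid for $n>3$), not just $p_1$ and $p_2$.
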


\begin{proof}
Take \(R=r\) in Lemma~\ref{lem:trunk} and assume \(n\ge r+2\), so in particular \(n>r\) and the congruence holds modulo \(t^{r+1}\):
\[
H_n(t)\equiv (1-t)\,A(t)^n \pmod{t^{r+1}}.
\]
Writing
\(
H_n(t)=\sum_{q\ge 0} h_q(\alpha_{1,n},\dots,\alpha_{n-1,n})\,t^q,
\)
the coefficient of \(t^r\) agrees on both sides:
\[
h_r(\alpha_{1,n},\dots,\alpha_{n-1,n})
=
\Coeff_{t^r}\!\bigl((1-t)A(t)^n\bigr).
\]

From \eqref{eq:def-aell} we have
\[
A(t)^n=\sum_{\ell\ge 0} a_\ell(n)\,t^{2\ell},
\]
so
\[
(1-t)A(t)^n
=
\sum_{\ell\ge 0} a_\ell(n)\,t^{2\ell}
-
\sum_{\ell\ge 0} a_\ell(n)\,t^{2\ell+1}.
\]
Hence the coefficient of \(t^r\) is \(a_m(n)\) if \(r=2m\) is even and \(-a_m(n)\) if \(r=2m+1\) is odd.
Applying Lemma~\ref{lem:am-closed} completes the proof.
\end{proof}

\subsubsection*{Examples}

\noindent\textbf{Case \(r=6\).}
For every integer \(n\ge 8\),
\begin{equation}\label{eq:hr6-explicit}
\boxed{
\sum_{1\le i_1\le\cdots\le i_6\le n-1}
\ \prod_{\nu=1}^{6}\cos\!\Bigl(\frac{2\pi i_\nu}{n}\Bigr)
=
\frac{n(n+4)(n+5)}{384}.
}
\end{equation}

\medskip

\noindent\textbf{Case \(r=7\).}
For every integer \(n\ge 9\),
\begin{equation}\label{eq:hr7-explicit}
\boxed{
\sum_{1\le i_1\le\cdots\le i_7\le n-1}
\ \prod_{\nu=1}^{7}\cos\!\Bigl(\frac{2\pi i_\nu}{n}\Bigr)
=
-\frac{n(n+4)(n+5)}{384}.
}
\end{equation}

\medskip

For instance, \eqref{eq:hr7-explicit} gives at \(n=9\) the exact value
\[
-\frac{9(9+4)(9+5)}{384}
=
-\frac{273}{64}
=
-4.265625.
\]
Brute-force coefficient extraction from \(\prod_{k=1}^{8}(1-\alpha_{k,9}t)^{-1}\), truncated at \(t^7\), returns the same value.

\medskip

While the general theory developed above guarantees only \emph{stable-range} polynomial behavior (for each fixed \(r\), once
\(n\) is large compared to \(r\)), the coefficient-extraction viewpoint sometimes yields more: in favorable cyclotomic
situations one can compute, for each fixed level \(n\), the \emph{entire} generating series
\(\sum_{r\ge 0}h_r(\alpha_{1,n},\dots,\alpha_{n-1,n})\,s^r\) in closed form.

\medskip

Let \(T_n(x)\) be the Chebyshev polynomial of the first kind, characterized by \(T_n(\cos\theta)=\cos(n\theta)\).
A standard factorization is
\[
T_n(x)-1
=
2^{n-1}(x-1)\prod_{k=1}^{n-1}\bigl(x-\cos(2\pi k/n)\bigr)
\]
(see, e.g., \cite[Ch.~2]{Rivlin1990}).
Substituting \(x=1/s\) and rearranging gives the exact product identity
\[
\prod_{k=1}^{n-1}\bigl(1-s\alpha_{k,n}\bigr)
=
\frac{s^{\,n}\bigl(T_n(1/s)-1\bigr)}{2^{n-1}(1-s)}.
\]
Therefore,
\[
\boxed{
\sum_{r\ge 0} h_r(\alpha_{1,n},\dots,\alpha_{n-1,n})\,s^r
=
\prod_{k=1}^{n-1}\frac{1}{1-s\alpha_{k,n}}
=
\frac{2^{n-1}(1-s)}{s^{\,n}\bigl(T_n(1/s)-1\bigr)}.
}
\]

\medskip

We note that this identity is \emph{global} in the coefficient parameter \(r\) for each fixed level \(n\).
Extracting the coefficient of \(s^r\) and then passing to the stable range \(n\ge r+2\) recovers the eventual polynomiality
(and the explicit stable-range values) predicted by Theorem~\ref{thm:hr-pattern}.

\section{Conclusions}
\label{sec:conclusions}

Even though our proofs are algebraic and combinatorial, the reason stabilization occurs is quite down to earth.
The basic step is to rewrite each cosine in terms of a root of unity:
\[
\cos\!\Bigl(\frac{2\pi k}{n}\Bigr)=\frac{\omega_n^k+\omega_n^{-k}}{2},
\qquad \omega_n=e^{2\pi i/n}.
\]
Once you do this, every expression built from the punctured cosine points becomes a sum of terms involving powers of \(\omega_n^k\).
When you then sum over \(k=1,\dots,n-1\), the only sums that appear are geometric sums of the form
\(\sum_{k=1}^{n-1}\omega_n^{mk}\).
Crucially, if your original expression is symmetric and has bounded total \(x\)-degree \(\le d\), then there are only finitely many integers \(m\)
that can show up in this way: the degree bound limits how complicated the exponents can be.
So once \(n\) is larger than that bound, there is simply no room for new cyclotomic behavior to enter.
In the stable range, the evaluation is forced to depend only on finitely many universal pieces of data.

\smallskip

The inverse-limit formalism is just a convenient way to state this cleanly.
It lets us treat ``one polynomial at each level'' as a single truncation-compatible object.
With that packaging in place, bounded-degree symmetry gives a uniform reduction:
for \(n\ge d+2\), the cosine-point evaluation factors through the finitely many punctured power sums
\(P_1(n),\dots,P_d(n)\).
In the purely polynomial case, this immediately implies that the evaluation eventually agrees with a single polynomial in \(n\),
and therefore any identity valid for all \(n\) can be checked at only finitely many small levels.

The product extension clarifies what can and cannot stabilize.
The bounded-degree symmetric polynomial part is the piece that must collapse to the finite list \(P_1,\dots,P_d\).
Any genuine cyclotomic variation that survives is confined to the explicit multiplicative factors \(M_Q(n)\),
which are visible and computable on their own.

Finally, the same viewpoint suggests several natural variants.

\medskip

\noindent\textbf{(1) Replacing full symmetry by Galois invariance.}
In this article we impose full \(S_{n-1}\)-symmetry in the variables \((x_1,\dots,x_{n-1})\).
A more arithmetic (and more natural) requirement is invariance under the cyclotomic Galois group
\[
G_n=\Gal(\Q(\omega_n)/\Q)\cong (\Z/n\Z)^\times,
\]
acting on the punctured cosine points by
\(\alpha_{k,n}\mapsto \alpha_{ak,n}\) for \(a\in(\Z/n\Z)^\times\).
This is the minimal condition one should expect if one wants \emph{rational} evaluations:
indeed, the numbers \(\alpha_{k,n}\) lie in the maximal real subfield \(\Q(\omega_n+\omega_n^{-1})\), and a value obtained from them
will in general live in that field.  If an evaluation is to land in \(\Q\), it must be fixed by every automorphism of that field,
hence it must be invariant under the induced \(G_n\)-action.

From the structural point of view, bounded-degree \(G_n\)-invariant families should again admit a stable-range reduction,
but now through finitely many \emph{orbit-sum} invariants: instead of symmetric power sums over all indices,
one expects traces along \(G_n\)-orbits (or, more generally, traces from intermediate subfields)
to replace the role of \(P_1(n),\dots,P_d(n)\).
This would lead to an analogue of the rigidity theorem in which the stable input is a finite list of trace-type cyclotomic invariants.

\medskip

\noindent\textbf{(2) Twisted evaluations by Dirichlet characters.}
A different direction is to insert arithmetic weights (as in \cite{CHJSV23}) , for example
\[
\sum_{k=1}^{n-1}\chi(k)\,F_{n-1}\!\Bigl(\cos\!\Bigl(\frac{2\pi k}{n}\Bigr)\Bigr),
\]
where \(\chi\) is a Dirichlet character modulo \(n\) (or modulo a divisor of \(n\)).
Here the symmetry is no longer permutation symmetry of the indices, but rather the harmonic-analytic structure imposed by \(\chi\).
After rewriting cosines in terms of \(\omega_n^k\), the basic building blocks become twisted geometric sums
\[
\sum_{k=1}^{n-1}\chi(k)\,\omega_n^{mk},
\]
and the relevant ``finite set of inputs'' should therefore be a finite list of such character-weighted cyclotomic sums,
i.e.\ Gauss--Jacobi/Ramanujan-type quantities, replacing the untwisted invariants \(P_h(n)\).
In favorable cases one might again expect a stable-range rigidity statement: bounded-degree constructions would force the twisted evaluation
to factor through finitely many explicitly describable twisted invariants, together with any residual product-type contributions.

\section*{Acknowledgments}
Carlos A. Cadavid gratefully acknowledges the financial support of Universidad EAFIT (Colombia) for the project Study and
Applications of Diffusion Processes of Importance in Health and Computation (project code 11740052022).
Juan D. Vélez gratefully acknowledges the Universidad Nacional de Colombia for its support during this research.

\bibliographystyle{plain}

\section*{Author information}

Carlos A.\ Cadavid\\
Department of Mathematics, Universidad Eafit, Medell\'in, Colombia\\
\texttt{ccadavid@eafit.edu.co}

\medskip
Juan D.\ V\'elez\\
Department of Mathematics, Universidad Nacional de Colombia, Medell\'in, Colombia\\
\texttt{jdvelez@unal.edu.co}

\end{document}